
\documentclass[11 pt]{amsart}



\usepackage[latin1]{inputenc}
\usepackage{amsfonts,amssymb,amsmath,amsthm}
\usepackage[headinclude,DIV13]{typearea}
\usepackage{hyperref}

\usepackage{geometry}
\geometry{a4paper,portrait,left=2.8cm,right=2.8cm,top=2.5cm,bottom=1.5cm}

\usepackage{graphicx, psfrag}

 \usepackage{color}
 \usepackage{setspace}

\newtheorem{rem}{Remark}
\newtheorem{lem}{Lemma}[section]
\newtheorem{pro}{Proposition}[section]
\newtheorem{defi}{Definition}[section]

\newtheorem{theo}{Theorem}

\newtheorem{cor}{Corollary}[section]

\theoremstyle{definition} 
\theoremstyle{definition} 

\renewcommand{\P}{\mathbb{P}}
\newcommand{\R}{\mathbb{R}}

\newcommand{\E}{\mathbb{E}}

\newcommand{\N}{\mathbb{N}}
\newcommand{\Z}{\mathbb{Z}}

\newcommand{\eps}{\varepsilon}

\numberwithin{equation}{section}

\textheight = 660pt
\voffset=1 cm

\begin{document}

\title[$\Lambda$-Fleming-Viot processes with frequency-dependent selection]
{On $\Lambda$-Fleming-Viot processes with general frequency-dependent selection}

\author{Adrian Gonzalez Casanova}

\address{Universidad Nacional Aut\'onoma de M\'exico, Instituto de Matem\'aticas, \'Area de la Investigaci\'on Cient\'ifica, Circuito Exterior, Ciudad Universitaria, 04510 Coyoacan, CDMX, M\'exico}
\email{adriangcs@matem.unam.mx}

\author{Charline Smadi}
\address{Univ. Grenoble Alpes, INRAE, LESSEM, F-38402 St-Martin-d'Hères, France and 
Univ. Grenoble Alpes, CNRS, Institut Fourier, 38610 Gi\`eres, France}
\email{charline.smadi@inrae.fr}

\maketitle

\begin{abstract}
 We construct a multitype  constant size population model allowing for general selective interactions as 
 well as extreme reproductive events. Our multidimensional model aims for the generality of adaptive dynamics and the tractability of population genetics. 
 It generalizes the idea of Krone and Neuhauser \cite{krone1997ancestral}, and Gonzalez Casanova and Spano \cite{casanova2018duality},
 who represented the selection by allowing 
 individuals to sample several potential parents in the previous generation before choosing the 'strongest' one, by allowing individuals to
 use any rule to choose their parent. The type of the newborn can even not be one of the types of the potential parents, which 
allows modelling mutations. Via a large population limit, we obtain a generalisation of $\Lambda$-Fleming Viot processes, with 
a diffusion term and a general frequency-dependent selection, which allows for non transitive interactions between the different types 
present in the population. We provide some properties of these processes related to extinction and fixation events, and give conditions for them 
to be realised as unique strong solutions of multidimensional stochastic differential equations with jumps.
Finally, we illustrate the generality of our model with applications to some classical biological interactions. 
This framework provides a natural bridge between two of the most prominent modelling frameworks of biological evolution: 
population genetics and eco-evolutionary models.
 \end{abstract}

 \vspace{1cm}
 
\textit{Key words:} $\Lambda$-Fleming Viot processes with frequency-dependent selection, Population genetics, Ecology, 
Duality, Ancestral Processes, Multidimensional Stochastic Differential Equations with jumps.\\

\textit{AMS 2010 subject classifications: }60G99, 60K35, 92D10, 92D11, 92D25.

\section{Introduction}

Modelling selection falls within the most delicate problems in ecology and evolution. A variety of hypotheses
have been proposed to describe how competing allelic types jostle against each
other in trying to propagate successfully in the next generation 
\cite{ewens2004mathematical,gillespie1984neutral,kimura1968evolutionary,kimura1972population}. 
Despite the complexity of the debate on the concept of selection itself,  in population genetics there
is agreement on the idea that an appropriate measure of the fitness strength
of a given allelic type is related to the probability of its eventual fixation in the population.
This agreement relies on the fact that generally, in population genetics models, 
fitnesses are considered as transitive in the sense that if  in some 
conditions an allele $1$ has a higher fitness than an allele $0$  and an allele $2$ has a higher fitness than an allele $1$, then the allele $2$ will have a higher fitness than the allele $0$.
This assumption impedes the modelling of non transitive interactions, as for instance the well known Rock-Paper-Scissors 
(RPS) interaction, where 
any of the three alleles involved may rise in frequency depending on the frequencies of the two other alleles present 
in the population \cite{sinervo1996rock}.

Eco-evolutionary models aim at taking into account feedbacks between ecological and evolutionary processes.
In this setting, interactions are defined at the individual scale, which allows to model a much larger variety of 
interactions than in population genetics models (as RPS interactions for instance \cite{billiard2017interplay}).
The drawback however, is that the variation of population size makes the analysis more complex.
For instance, even when studying processes faster than the time scale of population size evolution, we have to call upon technical 
results, as large deviations results, to control the population size dynamics (see \cite{champagnat2006microscopic} for a classical example).

The aim of this paper is to construct a constant size population model allowing to consider very general interactions, 
as well as extreme reproductive events.
This is achieved by a generalisation of a construction recently introduced in \cite{casanova2018duality}, based on the concept of ancestral selection 
graph introduced by Krone and Neuhauser \cite{krone1997ancestral}.
The main idea of this construction is that individuals first sample a random number of potential parents from the previous generation, 
and then choose their parent in the previous generation, with a rule which depends on the number and types of sampled potential parents. 
Mathematically, the rule that assigns types to the individuals, \textit{the coloring rule}, 
is a family of random variables which takes values in the type space and is indexed by the types of the potential parents.
Notice that the term 'potential parents' may be misleading as in the current work we 
make it possible for an individual to have a type different from all of its potential parents, this is in order
to model mutations. 
However, to make the link with \cite{casanova2018duality} clear we kept their notations.
Following \cite{casanova2018duality}, we also consider high-fecundity extreme reproductive events ($\Lambda$-events), where 
one individual may give birth to a number of offsprings of order $N$, the size of the population.
After a proper rescaling of time, we derive a large population limit of our population model described by a generator, and 
which can be realised as the strong 
solution of a stochastic differential equation (SDE) under suitable conditions. This SDE generalizes classical Wright-Fisher diffusion with 
selection and $\Lambda$-Wright-Fisher processes to a multidimensional case with general frequency-dependent selection and jumps given 
by a $\Lambda$-measure. We prove general properties on fixation and extinction for this class of models, 
and apply them to classical ecological interactions, as 
RPS interactions or negative frequency-dependent selection.

Notice that in a work conducted in parallel, Cordero et al. \cite{CHS19} also consider a generalisation of \cite{casanova2018duality} to 
a general selection case. Their focus differs substantially from ours, as they consider the $1$-dimensional setting, and concentrate on genealogies and duality 
properties of the frequency process with the ancestral process.
Furthermore, they seek for genealogies that minimise the number of potential ancestors.
We encourage interested readers to also have a look at \cite{CHS19}.

The paper is structured as follows. In Section \ref{sectionmodel} we describe the discrete model. Section \ref{sectiongeneralresults} is dedicated 
to the derivation of the large population limit as well as the statement of general results on the processes under consideration.
In Section \ref{sectionapplications} we present applications of our construction to model a set of ecological interactions, as well as 
properties on the long time behaviour of these specific processes. Finally, the proofs are given in Section \ref{sectionproofs}.

In the sequel, $\N^*:=\{1,2,...,\}$ will denote the set of positive integers, $\N:= \N^* \cup \{0\}$, $\Z := -\N^* \cup \N$, 
and for $N \in \N^*$, $[N]:=\{1,2,...,N\}$.
Finally, $|D|$ will denote the cardinality of a discrete set $D$, and for $K \in \N^*$ and $x \in [0,1]^K $, 
$\|x\|:= x_1+...+x_K$.

\section{The discrete model} \label{sectionmodel}

We consider constant size population models, with size $N \in \N^*$, and with discrete non-overlapping generations.
They generalise the models 
introduced in \cite{casanova2018duality}. 
The main novelty of our approach is that it includes the following features:
\begin{itemize}
 \item We are in a multidimensional setting. We denote by $E= [K]$ the allelic type space, with $K \in \N^*$.
 \item To choose a real parent, knowing the set of potential parents, we will introduce a colouring rule, which may depend on $N$ and on all 
 the characteristics of the set of potential parents (number and frequencies of the different types), and may be random.
 \item The type of the newborn may not be a type carried by one of the potential parents. This allows us to take into account mutations for instance.
\end{itemize}
Multitype models are widespread in the literature, both in population genetics (see for example \cite{Birkner,DM,SW1,SW2}, where even infinite type spaces 
are considered) 
and in eco-evolutionary models (for instance \cite{champagnat2011polymorphic,collet2011quasi,champagnat2014adaptation,bovier2018recovery,bovier2018crossing}). 
There are also interesting instances of colouring (2-types) 
models in which individuals \textit{observe} several potential parents and are coloured according to some rule depending on the observed sample 
\cite{BEK, CHS19, casanova2018duality}. Questions regarding the existence and (lack of) uniqueness of a colouring rule that leads to a prescribed stochastic 
differential equation are addressed in \cite{CHS19}. The literature that accounts for mutations is vast \cite{EG,DSW,BLW}.
However, as far as we are aware of, the family of models introduced in this paper constitutes the first class that integrates 
all the above points, and luckily the flexibility of this family does not compromise its simplicity.

We begin in Section \ref{section_no_extr} with the description of births in generations without extreme reproductive events, 
and describe extreme reproductive events in Section \ref{section_extr}. 
In Section \ref{section_freq}, we introduce the multidimensional frequency process, which will be the process of interest in the rest of the paper.

\subsection{Births without extreme reproductive events} \label{section_no_extr}

In the sequel, $g \in \Z$ will always denote a generation. 
For $ N \in \N^*$, we parametrize the strength of the selection via a probability distribution $Q_N$ on $\N^*$.
It will correspond to a number of 'potential parents' sampled by an individual.
In order to describe the dynamics of our discrete model, we now introduce a graph and the concept of colouring rule:

\begin{defi}\label{GenGra}
Let $N\in\N^*$ and $V:=\Z\times [N]$. Consider a family of independent uniformly on $[N]$ 
distributed random variables 
$$(U_{v,k}, v\in V, k\in\N^*)$$
and a family of independent $Q_N$ distributed random variables 
$$(K_v,v\in V)$$
with values in $\N^*$.  Let $\mathcal{E}$ be the set of directed edges from an individual to its potential parents
$$\mathcal{E}=\{\{v,(g-1,U_{(v,k)})\}, 
\text{ for all }v=(g,l)\in V\text{ such that } 1 \leq l \leq N \text{ and } k\leq K_v\}.$$ \textbf{The genealogical random di-graph} 
with parameters $N$ and $Q_N$ is the random di-graph $(V,\mathcal{E})$.
\end{defi}

For an individual $v$ alive at generation $g$, the variables $(U_{v,k}, k \leq K_v)$ are thus the potential parents of $v$, alive at generation $g-1$.

The idea of this graph is that we assign types to all the vertices 
in some generation, and we want to see types propagating in the subsequent generations. 
To do this we need to specify how a vertex is coloured (receive its type), given the types of the vertices in the previous 
generation which are connected to it. Figure 1 is meant to help understanding the following definition, which at first sight can seem 
technical.

\begin{defi}[Colouring rule] \label{def_col_rule}
Let  
$$\mathcal{C}=\bigcup_{n\in \N^*}E^n $$ 
be the possible sets of potential parents with their type that can be sampled by an individual.
A  $(N,E)$-colouring rule is a family of probability distributions over $E$ $$C_N=\{c^N_{z}(\cdot)\}_{z \in \mathcal{C}}.$$
\end{defi}

The role of the distribution $c^N_{z}$ is to describe how an individual chooses its real parent and its type (in case there is a mutation) 
when its set of potential parents is $z$ and the population size is $N$.

To simplify the notation in some proofs, we will work under the assumption that for any $z \in E$, $ c^N_{z}= \delta_z. $

This condition ensures that if an individual samples only 
one potential parent, it is its real parent. We could omit this assumption but the notations would 
become cumbersome (however, see Remark \ref{rem_colour_rule}).
Notice also that we allow the colouring rule to depend on $N$, which leaves more freedom.

\begin{defi}[Graph colouring without extreme event]\label{forward}
Fix $N\in\N^*$, $Q_N$ a probability distribution on $\N^*$, the space of types $E$ and a $(N,E)$-colouring rule $C_N$. A colouring of the graph $(V,E)$ is a 
function $f:V\rightarrow E$. Let $V_g$ be the vertices in the $g$-th generation. We construct a colouring $f$ recursively 
by first arbitrarily defining $f$ in $V_{0}$ and then extending the colouring to the subsequent generations using 
the colouring rule. Given that the colouring has been constructed in generation $g-1$, 
for any $v\in V_g$, let
$$ z_v=(f(u_1),...,f(u_{K_v}) ),$$
where  $u_k\in V_{g-1}$ is such that $(v,u_k)=(v,U_{v,k})$ for  $k\leq K_v$ (in other words, $u_k$ is a potential parent of $v$).
For every type $i\in E,$
$$
\P(f(v)=i)=c^N_{z_v}(i).
$$
\end{defi}

\begin{center}
\includegraphics[width=.75\textwidth]{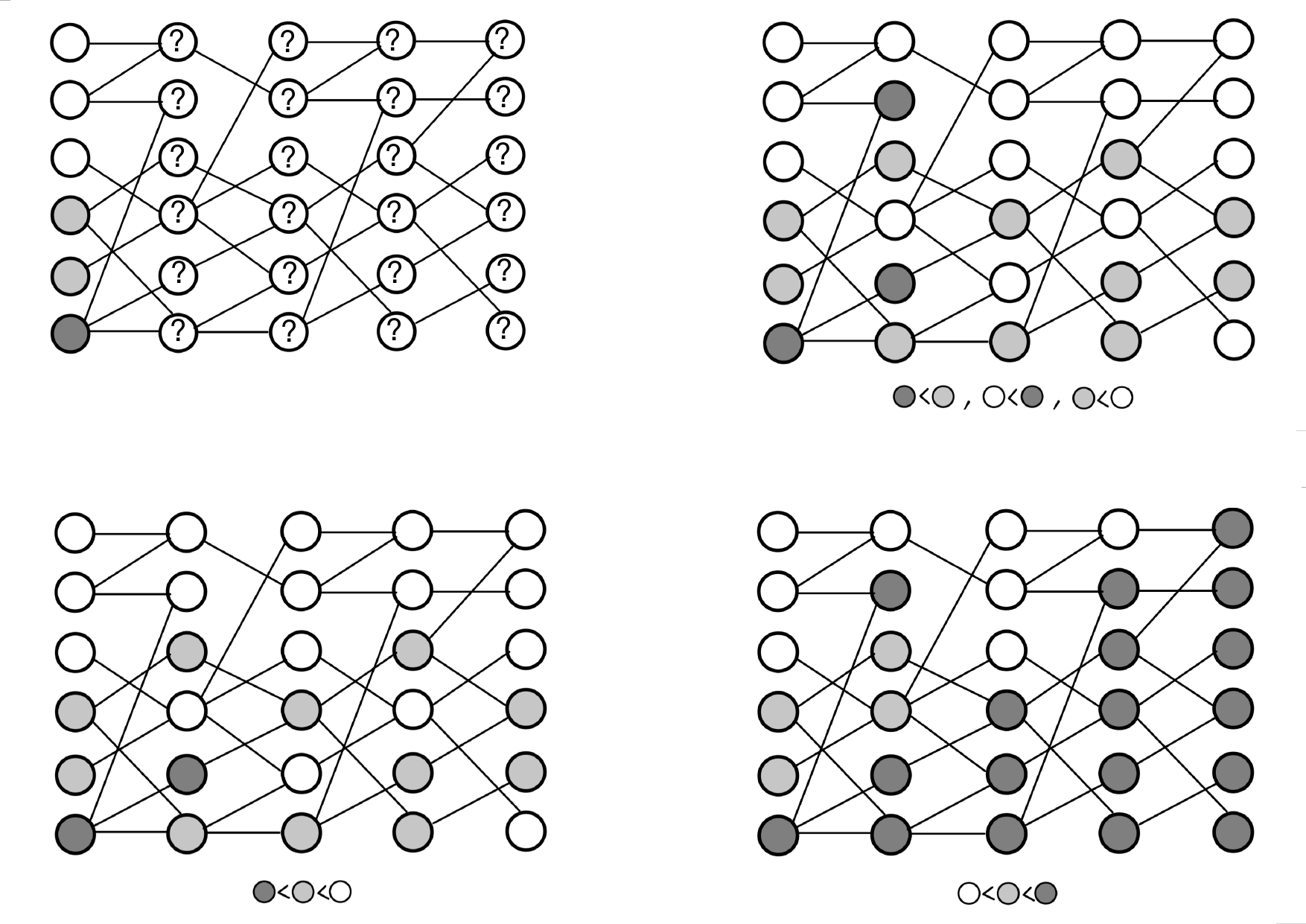}
\end{center}
\textbf{Figure 1}  After the random graph is generated (following the construction in \cite{casanova2018duality}) and the generation zero is arbitrarily 
coloured
 i.e.\ the types of the individuals in that generation are determined, the colour/type of every individual in the subsequent generations is given inductively by 
 a colouring rule. A colouring rule is a \textit{recipe} on how to colour each individual depending  of the types of its observed potential parents. For example, 
 in the last three pictures in this figure we observe the same graph with the same colouring at generation zero. However, the colouring rule is different. Here, in all 
 cases individuals copy their types from their potential parents and the colouring rule determines which colour to use if there are 
 more than one option. 
 We present two pictures with different transitive colouring rules and one with the rock-paper-scissors colouring rule.\\

In the sequel, we will use the terms type and colour interchangeably.
Following the three previous definitions, the reproduction mechanism thus works as follows when there is no extreme reproductive event:

\begin{enumerate}
 \item[(1)] (Choice of potential parents). The individual $v$ of generation $g$ samples 
with replacement
 a number $K_v$ distributed as $Q_N$ of 
 potential parents independently and uniformly 
 distributed on $(g-1) \times [N]$, $((g-1,U_{v,1}),...,(g-1,U_{v,K_v}))$.
 \item[(2)] (Choice of type). The type of $v$ is chosen according to the colouring rule, as described in Definition \ref{forward}.
\end{enumerate}

\subsection{Births with extreme reproductive events} \label{section_extr}

We now want to include the possibility of extreme reproductive events, that is to say events when one individual may generate a non-negligible fraction 
of the next generation.
The motivation to consider such reproductive events comes from the observation that some species, in particular marine species 
\cite{arnason2004mitochondrial,hedgecock1994does} and most viruses \cite{Irwin},
have offsprings distributions with very large variance, and that the Kingman coalescent is not a good model to describe their genealogy 
\cite{hoscheit2018multifurcating}. 
It is instead necessary to include the possibility of merging more than two individuals simultaneously. 
Such kinds of models have seen a growing interest from the discovery of $\Lambda$-coalescent trees by Sagitov \cite{sagitov1999general} 
and Donnelly and Kurtz  \cite{donnelly1999particle} independently, and 
their full description by Pitman \cite{pitman1999coalescents}.
To this purpose, we introduce a random background formed by a sequence of
i.i.d. Bernoulli trials 
$$ H = \{H_g : g \in \Z\} \in \{0, 1\}^\infty,$$
with a probability of success $\gamma_N \in [0, 1]$ and a sequence 
$$Z = \{Z_g : g \in \Z\}$$
of i.i.d. $[0,1]$-valued random elements
with common distribution $\Lambda$. 
$H$ and $Z$ are assumed to be independent,
and we also assume that $\Lambda([0,1])<\infty$.
For every
$g \in \Z$, $H_g = 1$ (resp., $H_g = 0$) indicates that at generation $g$ extreme reproduction
does (resp., does not) occur. $Z$ will give the expected sizes of extreme reproductive events, 
when they occur. 
Let 
$$Y^* = \{Y^*_g: g \in \Z\},$$
be a sequence of i.i.d. random variables, with $Y^*_{g}$ uniformly distributed among individuals in generation $g$ and $Y^*$ independent of $(H, Z)$.
Conditionally on $Y^*_{g-1}$, we thus assume that when $H_g=1$ individuals in generation $g$ choose one parent independently with the law:
\begin{equation} \label{defeta}
\eta_g := \mathcal{B}( Z_g ) \delta_{Y^*_{g-1}} + (1 - \mathcal{B} (Z_g)) U_g ,\end{equation}
where $\mathcal{B}( Z_g )$ is distributed as a Bernoulli random variable with parameter $Z_g$ and $U_g$ is uniformly 
distributed on $[N]$. $\mathcal{B}( Z_g )$ and $U_g$ are independent, and are drawn independently for each individual in 
generation $g$.

\begin{defi}[Graph colouring with extreme event]\label{forward2}
Fix $N\in\N^*$, $Q_N$ a probability distribution on $\N^*$, the space of types $E$ and a finite measure $\Lambda$ on $[0,1]$. 
Given that the colouring has been constructed until generation $g-1$, 
that there is an extreme reproductive event at generation $g$, we have the following colouring for every type $i\in E$ and $v \in V_g$:
$$
P(f(v)=i|Z_g,Y_{g-1}^*)=Z_g \mathbf{1}_{\{i, f(Y^*_{g-1})\}}+(1-Z_g) \frac{ \mathbf{Card} \{ u \in V_{g-1}, f(u)=i \}}{N}.
$$
\end{defi}

Following the previous definitions, the reproduction mechanism thus works as follows when there is an extreme reproductive event:

\begin{enumerate}
 \item[(1)] (Choice of the individual with a large progeny) An individual $Y^*_{g-1}$ is drawn uniformly among individuals of the generation $g-1$. 
 A variable $Z_g$ is also drawn according to the distribution $\Lambda$. It will give the size of the extreme reproductive event.
 \item[(2)] (Choice to be part of the extreme event). The individual $v$ of generation $g$ draws a Bernoulli variable $\mathcal{B}_v(Z_g)$ with parameter $Z_g$.
 \item[(3)] If $\mathcal{B}_v(Z_g)=1$, the parent of $v$ is $Y^*_{g-1}$.
 \item[(4)] If $\mathcal{B}_v(Z_g)=0$, the individual $v$ chooses its parent uniformly in generation $g-1$.
\end{enumerate}

\subsection{Frequency processes} \label{section_freq}

We have described in the two previous sections how individuals choose their parents in the previous generation. It allows us to construct a forward in time 
process, describing the dynamics of the composition of the population in terms of types.
Fix $N\in\N^*$, $Q_N$ a probability distribution on $\N^*$, $\Lambda$ a finite measure on $[0,1]$, 
$\gamma_N \in [0,1]$, and a colouring rule $C_N$. Colour abritrary the graph at generation $0$.
Then colour generations $g \geq 1$ as follows:
\begin{itemize}
\item Colour the graph following Definition \ref{forward} with colouring rule $C_N$ and distribution of
the number of
potential parents $Q_N$ when there is no extreme reproductive event ($H_g=0$ with probability $1-\gamma_N$).
\item Colour the graph following Definition \ref{forward2} with $Z$ $\Lambda$-distributed when there is an extreme reproductive event 
($H_g=1$ with probability $\gamma_N$).
\end{itemize}
Then the frequency process with parameters $(\Lambda, \gamma_N,Q_N,C_N)$ is by definition 
$\bar X^N_g=(X_g^{N,i},i\in E)$ such that
$$
X^{N,i}_g=\frac{1}{N}\big|\{v\in V_{g}: f(v)=i\}\big|.
$$
This Markov chain describes the genetic composition of the population at any generation.
The rest of the paper is devoted to the study of this process.

\section{General results}\label{sectiongeneralresults}

The models and subsequent frequency processes we have just introduced allow us to consider very general interactions.
Indeed we have a total freedoom on the colouring rule, and may even choose a type which is not carried by any of the potential parents sampled by an individual in the spirit of a mutation event.

In this section, we will first present a rescaling of the population process, which generalizes the class 
of Wright-Fisher processes considered until now. 
We will then describe the class of selection functions that we can obtain with our colouring rules, and we will finally 
prove general results on allele extinction and fixation of our class of models.

\subsection{Large population limit}

We will now prove that by rescaling time and taking the large population limit ($N \to \infty$), we may obtain a time and space continuous process. 
We focus on the case 
$$1-Q_N(\{1\})=: \rho_N \to 0, \quad (N \to \infty),$$ 
and to get a proper limit process, we will rescale the time by a factor of order $\min\{\rho_N^{-1},N\}$. For example, in the case $Q_N(\{1\})=1$, 
we obtain the scale of
the neutral Wright Fisher model, and in the case $1-Q_N(\{1\})=O(1/N)$ (corresponding to weak selection), 
the scale is the evolutionary scale $N$. If $1-Q_N(\{1\})=O(1/N^b)$ for $b\in(0,1)$ (which corresponds to moderate selection) the scale is $N^b.$
Before stating our result we need to introduce the following notations, for any finite measure $\Lambda$ on $[0,1]$ and $\alpha \in (0,1/2)$:
\begin{equation} \label{defLamdahat} \hat \Lambda:= \frac{\Lambda}{\Lambda([0,1])}, \quad \Lambda^\alpha_N(z \in .) := \frac{\Lambda(z \in .)}{z^2} \mathbf{1}_{\{z \geq N^{-\alpha}\}}\quad \text{and}
\quad  \hat \Lambda^\alpha_N:= \frac{\Lambda^\alpha_N}{\Lambda^\alpha_N([0,1])}. \end{equation}
Let us also denote for $K \in \N^*$ the $K$-th simplex
$$ \Delta_K:= \{ x \in [0,1]^K, \|x\|\leq 1 \}. $$
Moreover, for $N \in \N^*$, given the colouring rule $C_N$ (recall Definition \ref{def_col_rule}), consider the function $p^N:\Delta_K\to \Delta_K$, $p^N=(p_1^N,...,p_K^N)$  defined by
\begin{equation} \label{defpiN} p_i^N(x):=\P(f(v)=i|v \in V_1,X^N_0=x), \quad \forall \ x \in \Delta_K, i \in E \end{equation}
and $\mu^N:\Delta_K\rightarrow \R^K$, $\mu^N=(\mu_1^N,...,\mu_K^N)$ defined by
\begin{equation} \label{defmuiN} \mu_i^N(x):=p_i^N(x)-x_i, \quad \forall \ x \in \Delta_K, i \in E. \end{equation}
Then we have the following convergence result:

\begin{pro}\label{propconv}
 Let $\Lambda$ be a finite measure on $(0,1]$, $(C_N, N \in \N)$ a sequence of colouring rules and $\alpha$ in $(0,1/2)$.
 Consider for $N \in \N^*$ $(\bar{X}_g^N, g \in \Z)$ the frequency process  with parameters 
 $(\hat{\Lambda}^\alpha_N, \gamma_N,Q_N,C_N)$. Assume that there exist $\kappa>0$ and $\sigma \geq 0$ such that
 \begin{enumerate}
  \item[(i)] $\lim_{N \to \infty} \rho_N^{-1} \mu^N_i(x)= \mu_i(x) \in \R $, $\forall \ i \in E, x \in \Delta_K$,
  \item[(ii)] $\gamma_N = \Lambda^\alpha_N([0,1]) \rho_N/\kappa  + o \left(\Lambda^\alpha_N([0,1]) \rho_N\right)$,
  \item[(iii)] $\lim_{N \to \infty} 1/(N\rho_N)= \sigma/\kappa<\infty$ and $ \rho_N N^{2\alpha} \to 0$,
  \item[(iv)] for $k \geq 2$, $v \in V$, $\lim_{N \to \infty} Q_N(K_{v}=k|K_{v}>1)$ exists and is denoted by $\pi_{k-1}$,
  \item[(v)] for $v \in V$, $\beta:= \lim_{N \to \infty} \E [K_{v}-1|K_{v}>1]= \sum_{k=1}^\infty k\pi_k<\infty$.
 \end{enumerate}
Then, the sequence $(X^N_{\lfloor \kappa t /\rho_N \rfloor}, t \geq 0)$ is tight in the space of càdlàg functions $\mathbb{D}(\R_+,[0,1])$ equipped with the Skorokhod topology. Further, if we denote by $\mathcal{A}^N$ the 
discrete generator of 
$X^N_{\lfloor \kappa t /\rho_N \rfloor}$, the sequence of generators $\mathcal{A}^N$, applied on $f \in \mathcal{C}_2(\Delta_K)\to \R$ and $x \in \Delta_K$, converges to $\mathcal{A}$ given by 
\begin{equation} \label{inf_gene} \mathcal{A}f(x)= \kappa  \sum_{i=1}^K \mu_i(x) \frac{\partial f}{\partial x_i}(x) + 
\frac{\sigma}{2} \sum_{i,j=1}^K \sigma_{ij}(x) \frac{\partial^2 f}{\partial x_i x_j}(x)
+ \sum_{i=1}^K x_i \int_0^1 \left[ f((1-z)x + z \mathbf{e}_i)-f(x) \right]\frac{\Lambda(dz)}{z^2}, \end{equation}
where $\sigma_{ij}(x)=(1_{\{j=i\}}-x_j)x_i$ and $(\mathbf{e}_i, 1 \leq i \leq K)$ is the canonical basis of $\R^K$.
\end{pro}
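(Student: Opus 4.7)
The plan is to invoke a standard discrete-to-continuous convergence theorem (e.g.\ Theorem~4.2.6 of Ethier--Kurtz), which reduces the problem to (a) tightness of $(X^N_{\lfloor\kappa t/\rho_N\rfloor})$ in Skorokhod space and (b) pointwise convergence $\mathcal{A}^N f \to \mathcal{A}f$ for every $f \in C^2(\Delta_K)$, a core of $\mathcal{A}$. Conditioning on the Bernoulli $H_1$ that flags an extreme reproductive event,
$$\mathcal{A}^N f(x) = \frac{\kappa(1-\gamma_N)}{\rho_N}\,T^{\mathrm{nex}}_N f(x) + \frac{\kappa \gamma_N}{\rho_N}\,T^{\mathrm{ex}}_N f(x),$$
where $T^{\mathrm{nex}}_N f(x)$ and $T^{\mathrm{ex}}_N f(x)$ denote the one-step expected increments of $f$ conditional on $H_1 = 0$ and $H_1 = 1$, respectively.

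For the non-extreme term, Definition~\ref{forward} shows that given $X^N_0 = x$, the types of the $N$ individuals in generation $1$ are i.i.d.\ of law $p^N(x)$ from \eqref{defpiN}, so $N\bar X^N_1$ is multinomial. A second-order Taylor expansion of $f \in C^3(\Delta_K)$ then yields
$$T^{\mathrm{nex}}_N f(x) = \sum_i \mu^N_i(x)\,\partial_i f(x) + \frac{1}{2N}\sum_{i,j} c^N_{ij}(x)\,\partial^2_{ij}f(x) + O(N^{-2}\|f\|_{C^3}),$$
with multinomial covariances $c^N_{ij}(x) = p^N_i(x)(\mathbf{1}_{\{i=j\}} - p^N_j(x)) \to \sigma_{ij}(x)$, because $p^N_i(x) - x_i = \mu^N_i(x) = \rho_N\mu_i(x)(1+o(1)) \to 0$ by (i). Multiplying by $\kappa(1-\gamma_N)/\rho_N$ and using (i) and (iii) produces the drift $\sum_i \mu_i(x)\partial_i f(x)$ and the diffusion $(\sigma/2)\sum_{i,j}\sigma_{ij}(x)\partial^2_{ij}f(x)$ of~\eqref{inf_gene} (up to the overall factor $\kappa$, which rescales $\mu_i$).

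For the extreme term, Definition~\ref{forward2} gives that conditionally on the size $Z_1\sim\hat\Lambda^\alpha_N$ and on the type $j\in E$ of the tagged ancestor $f(Y^*_0)$ (equal to $j$ with probability $x_j$), $\bar X^N_1$ has conditional mean $(1-Z_1)x + Z_1\mathbf{e}_j$ up to binomial fluctuations of order $1/\sqrt{N}$. A Taylor expansion then gives
$$T^{\mathrm{ex}}_N f(x) = \sum_j x_j \int_0^1 \bigl[f((1-z)x + z\mathbf{e}_j) - f(x)\bigr]\,\hat\Lambda^\alpha_N(dz) + O(N^{-1}\|f\|_{C^2}).$$
By (ii), $(\kappa\gamma_N/\rho_N)\hat\Lambda^\alpha_N(dz) = (\Lambda(dz)/z^2)\mathbf{1}_{\{z \geq N^{-\alpha}\}}(1+o(1))$, and the truncation at $N^{-\alpha}$ is harmless in the limit because the integrand cancels to first order in $z$ (via $\sum_j x_j(\mathbf{e}_j - x) = 0$) and is therefore $O(z^2)$ near $z=0$ for $f \in C^2$, so the limiting integral against the infinite measure $\Lambda(dz)/z^2$ converges absolutely and yields the jump term of $\mathcal{A}$.

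Tightness on the compact simplex $\Delta_K$ follows from compact containment together with Aldous' criterion applied to the martingale $f(X^N_{\lfloor\kappa t/\rho_N\rfloor}) - \int_0^t \mathcal{A}^N f(X^N_{\lfloor\kappa s/\rho_N\rfloor})\,ds$, using the uniform boundedness of $\mathcal{A}^N f$ for $f \in C^2(\Delta_K)$ established above. The main obstacle I anticipate is the uniform-in-$x$ control of the Taylor remainders for $T^{\mathrm{nex}}_N$: since $p^N_i(x)$ involves averaging the colouring rule over a random number $K_v \sim Q_N$ of sampled potential parents with possibly unbounded support, conditions (iv)--(v) are exactly what guarantees that the limiting sampling distribution has a finite first moment $\beta$, preventing higher-order selection effects from blowing up and ensuring that the non-extreme step is genuinely $O(\rho_N)$. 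Condition (iii) ($\rho_N N^{2\alpha}\to 0$) is equally delicate: it ensures $\gamma_N\in[0,1]$ for large $N$ (since $\Lambda^\alpha_N([0,1])$ may grow like $N^{2\alpha}$) and that the small-jump truncation error vanishes at the proper rate $O(N^{-1+2\alpha})$.
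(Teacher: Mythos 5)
Your computation of the generator limit is essentially the paper's own argument: the same conditioning on the extreme-event indicator to split $\mathcal{A}^N f$ into a non-extreme and an extreme part, the same multinomial representation of the non-extreme step giving drift $\mu^N_i(x)=p^N_i(x)-x_i$ and covariance $p^N_i(1_{\{i=j\}}-p^N_j)/N\to\sigma_{ij}(x)/N$, the same second-order Taylor expansion of the extreme step around $(1-z)x+z\mathbf{e}_j$ with binomial fluctuations of order $N^{-1/2}$ controlled via Cauchy--Schwarz, and the same removal of the truncation $\mathbf{1}_{\{z\geq N^{-\alpha}\}}$ by observing that $\sum_j x_j(\mathbf{e}_j-x)=0$ makes the integrand $O(z^2)$ near $0$ (and, like the paper, you treat the factor $\kappa$ as absorbed into $\mu$). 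Where you genuinely diverge is tightness: you invoke compact containment plus Aldous' criterion applied to the approximating martingales, whereas the paper appeals to Theorem 2.3 of Bansaye--M\'el\'eard \cite{bansaye2018scaling}, verifying their hypotheses \textbf{(H0)}, \textbf{(H1)}, \textbf{(H1')} with the exponential test functions $H_{\bar x}(\bar u)=1-e^{-\sum_i x_iu_i}$. Your route is the more classical Ethier--Kurtz one and is self-contained modulo the standard Aldous--Jakubowski machinery; the paper's route outsources the work to a ready-made scaling theorem whose hypothesis \textbf{(H1')} is exactly the uniform-in-$x$ boundedness of $\mathcal{A}^N f$ on the test class. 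Note that both routes hinge on the same point you flag as your ``main obstacle'': condition (i) is stated only pointwise in $x$, so uniform control of $\rho_N^{-1}\mu^N$ (hence of $\|\mathcal{A}^N f\|_\infty$) is implicitly assumed rather than derived -- the paper glosses over this in exactly the same way when it claims ``uniform convergence of the generators'' -- so this is a shared imprecision rather than a gap specific to your argument.
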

Assumption (iii) implies that $\min\{\rho_N^{-1},N\}=O(\rho_N^{-1})$. The case $\lim_{N \to \infty} 1/(N\rho_N)=\infty$ is the neutral case, and is well known in the literature.
The first term of the generator describes a general frequency-dependent selective function, the second term is the classical Wright-Fisher diffusion, 
and the last part corresponds to the generator of a $K$-dimensional $\Lambda$-Fleming-Viot process.
Such a general multidimensional process including these three elements has been neither introduced nor studied until now to the best of our knowledge. 
Extending this construction to include multiple simultaneous mergers would make the notation heavier and the proofs even more technical.
For these reasons we do not include a $\Xi$-coalescent part in this work.

\begin{rem} \label{rem_colour_rule}
 Notice that if we do not assume that when only one potential parent is sampled it is necessarily the real parent and its offspring inherits its type
 (see the end of Definition \ref{def_col_rule}), we can obtain a different diffusion term.
More precisely, if we introduce for $i \in E$ and $x \in \Delta_K$,
$$ g_i(x):= \sum_{j=1}^K x_i c_j^N(i), $$
and if $G(x)$ is a random variable with values in $\N^K$, and law given by
$$ \P(G(x)=Y)= {N \choose Y_1,...,Y_K}g_1^{Y_1}(x)...g_K^{Y_K}(x), $$
then for $(i,j) \in E^2$ and if the limit exists,
$$ \sigma \sigma_{ij}(x)= \lim_{N \to \infty} \frac{\kappa}{\rho_N} \E \left[ \left( \frac{G_i(x)}{N} -x_i\right)  \left( \frac{G_j(x)}{N} -x_j\right) \right]. $$
\end{rem}

Under some assumptions on the parameters (see Corollary \ref{cor_LiPu} for examples) we can link the infinitesimal generator 
$\mathcal{A}$ to the solutions to a stochastic differential equation (SDE) generalizing the one dimensional case derived in \cite{casanova2018duality}.

\begin{pro}\label{Generator}
Assume that the following SDE is well posed
\begin{align} \label{SDE_strong}
 dX(t)= &  \mu(X(t))dt +\sqrt{\sigma}\zeta(X(t))dB_t \nonumber \\
 &+ \sum_{i=1}^{K} \mathbf{e}_i \int_0^1 \int_0^1 z 
 \left( \mathbf{1}_{\left\{  \sum_{j=1}^{i-1}X_j(t-) \leq u < \sum_{j=1}^{i}X_j(t-) \right\}}-X_i(t-) \right) 
 \tilde{N}(dt,du,dz),
\end{align}
where $B$ is a $K$-dimensional Brownian motion, $(\mathbf{e}_i, i \leq K)$ is the canonical basis of $E$, $\tilde{N}$ is a compensated Poisson random measure on $\R_+ \times [0,1]
\times [0,1]$ with intensity $dt du \Lambda (dz)/z^2$, and $\zeta$ has the following expression for $x \in \Delta_K$: 
\begin{align} \label{form_zeta} \zeta_{ij}(x)&= 0 \quad \text{if} \quad i<j,\\
\zeta_{ii}(x)&=\sqrt{ \frac{x_i(1-x_1-...-x_i)}{1-x_1-...-x_{i-1}} },\nonumber \\
\zeta_{ij}(x) &= -x_i \sqrt{ \frac{x_j}{(1-x_1...-x_{j-1})(1-x_1...-x_j)} } \quad \text{if} \quad  i > j .\nonumber\end{align}
Then every solution of \eqref{SDE_strong} admits the generator in Equation \eqref{inf_gene}.
\end{pro}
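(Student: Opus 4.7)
The plan is to apply It\^o's formula for jump-diffusions to $f(X_t)$ for $f \in \mathcal{C}_2(\Delta_K)$ and match each resulting contribution with the corresponding term of $\mathcal{A}f$ in \eqref{inf_gene}. The drift $\mu(X(t))\,dt$ in \eqref{SDE_strong} produces directly $\sum_{i=1}^{K}\mu_i(x)\partial f(x)/\partial x_i$. The continuous martingale $\sqrt{\sigma}\,\zeta(X(t))\,dB_t$ generates the It\^o correction $\tfrac{\sigma}{2}\sum_{i,k}(\zeta\zeta^T)_{ik}(x)\partial^2 f(x)/\partial x_i\partial x_k$, so the algebraic heart of the argument is the matrix identity $(\zeta\zeta^T)_{ik}(x)=\sigma_{ik}(x)=(\mathbf{1}_{\{i=k\}}-x_k)x_i$. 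I would verify this by setting $s_{\ell}:=1-x_1-\cdots-x_{\ell}$ (with $s_0=1$) and exploiting the telescoping identity $x_j/(s_{j-1}s_j)=1/s_j-1/s_{j-1}$, which collapses the sums $\sum_{j\le i\wedge k}\zeta_{ij}\zeta_{kj}$ coming from the lower-triangular form \eqref{form_zeta}: the diagonal entries yield $x_i(1-x_i)$, and for $i>k$ the subdiagonal contribution $\zeta_{ik}\zeta_{kk}=-x_ix_k/s_{k-1}$ combines with the telescoped sum $\sum_{j<k}\zeta_{ij}\zeta_{kj}=x_ix_k/s_{k-1}-x_ix_k$ to leave exactly $-x_ix_k$.

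For the jump integral, I would use that the frequency process lives on the face $\{\|x\|=1\}$ of $\Delta_K$ (which is preserved by the SDE, since $\sum_i\mu_i(x)=0$, the covariance satisfies $\sum_j\sigma_{ij}(x)=x_i(1-\|x\|)=0$ there, and the jump $x\mapsto(1-z)x+z\mathbf{e}_i$ preserves the $\ell^1$-norm), so that
$$\int_0^1 \mathbf{1}_{\{\sum_{j<i}x_j\le u<\sum_{j\le i}x_j\}}\,du = x_i.$$
This makes the vector integrand $h(u,z,x):=\sum_i \mathbf{e}_i\,z(\mathbf{1}_{\{\sum_{j<i}x_j\le u<\sum_{j\le i}x_j\}}-x_i)$ of \eqref{SDE_strong} satisfy $\int_0^1 h(u,z,x)\,du=0$. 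Consequently, in the It\^o formula for the compensated Poisson integral, the linear compensator correction $\nabla f(X_{s^-})\cdot h$ has zero $u$-mean and does not contribute. What remains is the non-linear part, and partitioning $[0,1]$ into the $K$ intervals of lengths $x_1,\ldots,x_K$ (on each of which the jump is $x\mapsto(1-z)x+z\mathbf{e}_i$) reduces it to $\sum_{i=1}^{K} x_i\int_0^1[f((1-z)x+z\mathbf{e}_i)-f(x)]\Lambda(dz)/z^2$, matching the third term of $\mathcal{A}f$.

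Finally, I would take expectations: since $\Delta_K$ is compact, $f\in\mathcal{C}_2(\Delta_K)$ has bounded first and second derivatives there, and $\Lambda([0,1])<\infty$, the Brownian and $\tilde{N}$-martingale parts in It\^o's formula are true martingales (with localisation by stopping times if necessary). Dividing by $t$ and letting $t\downarrow 0$ then identifies $\mathcal{A}f(x)$ as in \eqref{inf_gene}. The main obstacle I anticipate is the Cholesky-type identity $\zeta\zeta^T=\sigma$; the remainder is standard stochastic calculus, with the crucial simplification being the vanishing of the linear compensator correction guaranteed by the normalisation $\|x\|=1$.
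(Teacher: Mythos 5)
Your proposal is correct and follows essentially the same route as the paper: the core of both arguments is the verification of the Cholesky-type identity $\zeta\zeta^{T}=\sigma$ via the telescoping relation $x_j/(s_{j-1}s_j)=1/s_j-1/s_{j-1}$, with the drift read off directly and the jump term identified with the $\Lambda$-Fleming--Viot generator. You supply more detail than the paper on the jump contribution (the paper delegates it to a citation), in particular the observation that $\int_0^1 h(u,z,x)\,du=0$ on the invariant face $\|x\|=1$ kills the compensator correction, which is a worthwhile clarification but not a different method.
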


If we aim at showing the weak convergence of the sequence of frequency processes, 
we need to prove existence and uniqueness of the solution to the limiting SDE \eqref{SDE_strong}.

\begin{theo} \label{theo_strong_SDE}
Let us assume that the hypotheses of Proposition \ref{propconv} hold and there exists a unique (in law) solution 
to \eqref{SDE_strong} that we denote by $(X(t), t \geq 0)$. Then the frequency process 
$(X^N_{\lfloor \kappa t /\rho_N \rfloor}, t \geq 0)$ converges in law to the space and time continuous process 
$(X(t), t \geq 0)$, and $(X(t), t \geq 0)$ admits $\mathcal{A}$ defined in \eqref{inf_gene} as infinitesimal generator.
\end{theo}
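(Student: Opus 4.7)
The plan is to combine the two preparatory results, Proposition \ref{propconv} and Proposition \ref{Generator}, with a standard martingale-problem argument in the style of Ethier--Kurtz. From Proposition \ref{propconv} we already have that the sequence $(X^N_{\lfloor \kappa t/\rho_N\rfloor}, t \geq 0)$ is tight in the Skorokhod space $\mathbb{D}([0,\infty),\Delta_K)$, and that the discrete generators $\mathcal{A}^N$ converge pointwise to $\mathcal{A}$ on $\mathcal{C}^2(\Delta_K)$. Hence any subsequential limit $X^\infty$ exists in law.

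First I would verify that $X^\infty$ solves the martingale problem associated with $\mathcal{A}$. For every $f \in \mathcal{C}^2(\Delta_K)$, the process
\begin{equation*}
M^{N,f}_t := f(X^N_{\lfloor \kappa t/\rho_N\rfloor}) - f(X^N_0) - \int_0^t \mathcal{A}^N f(X^N_{\lfloor \kappa s/\rho_N\rfloor})\, ds
\end{equation*}
is (up to a vanishing discretisation error) a martingale with respect to the natural filtration. Using the generator convergence $\mathcal{A}^N f \to \mathcal{A} f$ uniformly on $\Delta_K$ (which is compact), together with the tightness, one passes to the limit along a convergent subsequence and deduces that
\begin{equation*}
M^{\infty,f}_t := f(X^\infty_t) - f(X^\infty_0) - \int_0^t \mathcal{A} f(X^\infty_s)\, ds
\end{equation*}
is a martingale for $X^\infty$. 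In particular $X^\infty$ is a solution of the martingale problem for $(\mathcal{A}, \mathcal{C}^2(\Delta_K))$.

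Now I would invoke Proposition \ref{Generator}: any strong solution $X$ of the SDE \eqref{SDE_strong} has generator $\mathcal{A}$, hence is also a solution of the same martingale problem. By assumption \eqref{SDE_strong} has a unique in law solution, and a routine argument (using It\^o's formula for the jump-SDE on test functions $f \in \mathcal{C}^2(\Delta_K)$) shows that the SDE solution and the martingale problem solution determine the same one-dimensional, hence finite-dimensional, distributions; therefore the martingale problem is itself well-posed. This identifies the law of every subsequential limit $X^\infty$ with the law of $X$ from \eqref{SDE_strong}. Combined with tightness this upgrades the subsequential convergence to full convergence in law, proving the theorem.

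The main obstacle is the equivalence between the martingale problem for $\mathcal{A}$ and the SDE \eqref{SDE_strong}: one direction is given by Proposition \ref{Generator}, but the converse (that well-posedness of the SDE forces uniqueness of the martingale problem) requires some care because of the jump part driven by $\tilde N(dt,du,dz)/z^2$, which can be infinite near $0$ when $\Lambda$ charges a neighborhood of $0$. A truncation argument separating the large jumps ($z \geq \varepsilon$) from the small jumps, combined with the finite-measure assumption on $\Lambda$ (and the compensator structure of $\tilde N$), should handle this cleanly. The rest of the argument (tightness already in hand, pointwise generator convergence on $\mathcal{C}^2$ already established, uniform control on $\Delta_K$ by compactness) is essentially bookkeeping.
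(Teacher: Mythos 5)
Your overall skeleton -- tightness plus convergence of generators, then identification of every subsequential limit through uniqueness of the limiting law -- is the same as the paper's, but the two proofs distribute the work very differently. The paper's proof of this theorem does \emph{not} take the generator convergence for granted: essentially the entire proof consists of establishing it, by writing the one-step transition as a mixture of a multinomial resampling step $M_x^{(1,N)}$ and an extreme-reproduction step $M_x^{(2,N)}$, Taylor-expanding, controlling the fluctuation terms $H_i^N(z)$ by Cauchy--Schwarz and independence (which is where the hypothesis $\rho_N N^{2\alpha}\to 0$ is consumed), and finally replacing $\Lambda_N^\alpha(dz)$ by $\Lambda(dz)/z^2$ using the $O(z^2)$ bound on $\E[f((1-z)x+z\mathbf{e}_B)-f(x)]$. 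Once that computation is done, the paper simply cites Theorems 17.25 and 17.28 of Kallenberg to convert generator convergence plus well-posedness of the limit into convergence in law; your explicit martingale-problem argument is a hand-unpacked version of that citation and is fine in principle. Be aware, though, that your reliance on Proposition \ref{propconv} for the generator convergence is circular relative to how the paper is actually organised: the paper's proof of Proposition \ref{propconv} establishes only tightness and explicitly defers the (uniform) generator convergence to the proof of this very theorem. So a self-contained proof along your lines still owes the reader that computation. Two further points deserve more care than you give them: (i) you assert that the pointwise convergence $\mathcal{A}^N f\to\mathcal{A}f$ upgrades to uniform convergence ``by compactness of $\Delta_K$'' -- pointwise convergence on a compact set is not automatically uniform, and uniformity (or at least enough control to pass to the limit inside the time integral) has to be extracted from the explicit error bounds in the generator computation; (ii) the step ``uniqueness in law for the SDE \eqref{SDE_strong} implies well-posedness of the martingale problem for $\mathcal{A}$'' is not just a truncation of small jumps: the needed direction is that every solution of the martingale problem can be realised as a weak solution of \eqref{SDE_strong}, which requires a representation theorem (constructing the driving Brownian motion and the Poisson random measure on an enlarged space, in the spirit of Kurtz's equivalence of martingale problems and weak solutions). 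The paper avoids this by phrasing everything at the level of generators and invoking Kallenberg directly.
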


To our knowledge, similar results have only been derived in the one dimensional case in \cite{casanova2018duality} 
and recently in \cite{CHS19}. 
In this special case (where there are only two alleles) we can ensure the existence of a unique strong solution for the SDE 
under consideration.
We can also provide this property in the multidimensional setting when specific assumptions are fulfilled. 
There had been a lot of recent developments in proving existence and uniqueness of 
multidimensional SDEs (see for example
\cite{BLP, kurtz2007,kurtz2014,li2012strong, xi2017jump}). It should be mention that moment duality is usually useful to prove such results, but we do not have this tool in the general setting that we are investigating. The next corollary presents some sufficient conditions.

\begin{cor} \label{cor_LiPu}
If the hypotheses of Proposition \ref{propconv} hold, the function $\mu$ is continuous, and at least one of the following three statements is true
\begin{enumerate}
\item  $K=2$
\item $\sigma \equiv 0$ and $\int_0^{1}\Lambda(dy)/y<\infty$
\item Selection is transitive (See Section \ref{TO} below)
\end{enumerate}
Then, using the notation of Theorem \ref{theo_strong_SDE},
 $$
(X^N_{\lfloor \kappa t /\rho_N \rfloor}, t \geq 0)\Rightarrow (X(t), t \geq 0),
$$
where $(X(t), t \geq 0)$ is the pathwise unique strong solution to the SDE \eqref{SDE_strong} and admits $\mathcal{A}$, defined in 
\eqref{inf_gene} as its infinitesimal generator.
\end{cor}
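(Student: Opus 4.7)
The plan is to apply Theorem \ref{theo_strong_SDE} as a black box: that theorem already delivers weak convergence of $(X^N_{\lfloor \kappa t/\rho_N\rfloor})$ to any solution of \eqref{SDE_strong} as soon as uniqueness in law holds, so the content of the corollary reduces to verifying well-posedness of the SDE \eqref{SDE_strong} under each of the three hypotheses. In all three cases I would aim for pathwise uniqueness; combined with weak existence (which comes for free from the tightness assertion and the generator convergence in Proposition \ref{propconv}), Yamada--Watanabe then yields a pathwise unique strong solution and, in particular, uniqueness in law, so that Theorem \ref{theo_strong_SDE} applies and identifies $\mathcal{A}$ as the generator via Proposition \ref{Generator}.

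For case (1), when $K=2$, the constraint $X_t^1 + X_t^2 = 1$ forces the process to live on the segment $\{(y,1-y): y\in[0,1]\}$, and \eqref{SDE_strong} collapses to a scalar SDE for $Y_t=X_t^1$ driven by a one-dimensional Brownian motion and a compensated Poisson measure, with diffusion coefficient proportional to $\sqrt{Y_t(1-Y_t)}$ and jump coefficient Lipschitz in the state variable. This is precisely the family of SDEs treated in \cite{casanova2018duality}, whose pathwise uniqueness result can be invoked verbatim.

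For case (2), $\sigma\equiv 0$ together with $\int_0^1 \Lambda(dy)/y<\infty$, the diffusion term disappears. Since for each $x\in\Delta_K$ and each $i$ one has $\int_0^1 (\mathbf{1}_{\{\sum_{j<i}x_j\leq u<\sum_{j\leq i}x_j\}}-x_i)\,du=0$, the compensator of the jump term integrates to zero in $u$, and the finite intensity $\int_0^1 \Lambda(dz)/z$ allows one to rewrite the jump integral with respect to the uncompensated Poisson measure. The SDE then describes a piecewise-deterministic Markov process on the compact simplex $\Delta_K$, with bounded continuous drift $\mu$ between jumps and a bounded-rate jump kernel; pathwise uniqueness and strong existence follow either by an interlacing argument between the ODE flow driven by $\mu$ and the finitely many jumps in any bounded time interval, or by a direct application of the jump-SDE framework of \cite{li2012strong} with the diffusion coefficient set to zero.

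For case (3), transitive selection (Section \ref{TO}) endows the drift $\mu$ with a structure compatible with a total order on the types, which matches the triangular-monotone form in which \cite{li2012strong} establishes pathwise uniqueness and strong existence for multidimensional Fleming--Viot type SDEs on the simplex; note that the diffusion coefficient $\zeta$ in \eqref{form_zeta} is already triangular by construction, so the task reduces to checking that under transitivity $\mu$ satisfies the sublinear growth and monotonicity hypotheses of that reference. This is where the main obstacle lies: cases (1) and (2) are, respectively, a direct citation and a straightforward piecewise-deterministic construction, whereas case (3) requires one to unpack the definition of transitivity from Section \ref{TO} and to translate it into the precise global Lipschitz or monotonicity condition on $\Delta_K$ demanded by the existing multidimensional existence--uniqueness theorems.
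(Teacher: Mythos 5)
Your overall strategy (reduce the corollary to well-posedness of \eqref{SDE_strong}, then invoke Theorem \ref{theo_strong_SDE} and Proposition \ref{Generator}) is the same as the paper's, and your treatment of case (1) by collapsing to a scalar SDE of the type in \cite{casanova2018duality} is acceptable (the paper instead verifies the conditions of Theorem 5.1 of \cite{li2012strong}, which tolerates the $1/2$-H\"older diffusion coefficient precisely when $K=2$). However, there are two genuine gaps. First, in case (2) your primary argument --- that the process is piecewise deterministic with finitely many jumps per bounded interval, so that an interlacing construction applies --- is wrong: the jump intensity is $dt\,du\,\Lambda(dz)/z^2$, and $\int_0^1 \Lambda(dz)/z<\infty$ does \emph{not} imply $\int_0^1 \Lambda(dz)/z^2<\infty$ (take $\Lambda(dz)=z\,dz$), so infinitely many jumps may occur in any time interval and there is no ODE flow to interlace. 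The hypothesis $\int_0^1\Lambda(dy)/y<\infty$ is used for something else entirely: it makes the jump coefficient $c(x,u,y)=y(\mathbf{1}_{\{\cdot\}}-x_i)$ Lipschitz in the integrated sense $\int_0^1\int_0^1|c(x,u,y)-c(z,u,y)|\,du\,\Lambda(dy)/y^2\leq C|x-z|\int_0^1\Lambda(dy)/y$, which together with the Lipschitz continuity of $\mu$ (itself a computation one must do from the colouring-rule representation of $\mu$) lets one apply Corollary 2.9 of \cite{xi2017jump}.

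Second, case (3) is not actually proved in your proposal, and the route you sketch does not work: the multidimensional monotone/triangular framework of \cite{li2012strong} is exactly what fails for $K>2$, because $\zeta$ in \eqref{form_zeta} only satisfies $|\zeta_{ij}(x)-\zeta_{ij}(z)|^2\leq C|x-z|$ rather than $C|x-z|^2$, regardless of any structure on $\mu$. The missing idea is an aggregation argument specific to the transitive drift \eqref{shape_mui}: for each $i_0$ the partial sum $Y_{i_0}=X_1+\cdots+X_{i_0}$ satisfies a \emph{closed} one-dimensional equation, since $\sum_{i\leq i_0}\mu_i(x)=\kappa\sum_k\pi_k(Y_{i_0}^{k+1}-Y_{i_0})$ depends only on $Y_{i_0}$, the quadratic variation computation from Proposition \ref{Generator} gives $d\langle Y_{i_0},Y_{i_0}\rangle/dt=\sigma Y_{i_0}(1-Y_{i_0})$ so the diffusion term is $\sqrt{\sigma Y_{i_0}(1-Y_{i_0})}\,dW^{(i_0)}$ for some Brownian motion $W^{(i_0)}$, and the jump terms telescope to $z(\mathbf{1}_{\{u<Y_{i_0}\}}-Y_{i_0})$. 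Each $Y_{i_0}$ is then the pathwise unique strong solution of the one-dimensional SDE of Lemma 3.6 in \cite{casanova2018duality}, and $X_{i_0}=Y_{i_0}-Y_{i_0-1}$ recovers the full process. Without this reduction, case (3) remains open in your write-up.
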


The assumption $\sigma \equiv 0$ in statement (2) of the previous result comes from the fact that the diffusion coefficient 
of \eqref{SDE_strong} is not Lipschitz on $\Delta_K$ if $\sigma>0$. However, as long as the solution to \eqref{SDE_strong} is not too close 
to the boundaries of $\Delta_K$ this problem does not arise. Hence, if we introduce the stopping 
times, for any $0<\eps<1/2$:
$$ T_{(\eps,X)}:= \inf \left\{ t \geq 0, \min_{1 \leq i \leq K} X_i(t)\leq \eps \right\}, $$
we can prove the following result:
\begin{lem} \label{unicity_before_Teps}
 Let $0<\eps<1/2$ and assume that 
 $$\int_0^1 \Lambda(dy)/y<\infty.$$
 Then the SDE \eqref{SDE_strong} admits a pathwise unique strong solution $X$ on the time interval $[0,T_{(\eps,X)})$.
\end{lem}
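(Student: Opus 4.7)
The plan is to localise the SDE \eqref{SDE_strong} inside the region where its coefficients are smooth and then to apply a classical strong existence and uniqueness result for jump SDEs.

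First I would observe that on the closed set
\[
D_\eps := \{ x \in \Delta_K : x_i \geq \eps \text{ for all } i \in [K] \},
\]
every denominator $1 - x_1 - \cdots - x_{j-1}$ appearing in \eqref{form_zeta} is bounded below by $x_j \geq \eps$, so each square root is taken of a quantity bounded away from $0$ and $+\infty$. Hence $\zeta$ is $C^\infty$, and in particular Lipschitz, on $D_\eps$. Using a smooth cut-off, I would extend $\zeta$ and (the locally Lipschitz) $\mu$ to globally Lipschitz functions $\tilde\zeta, \tilde\mu$ on $\R^K$ that coincide with $(\zeta, \mu)$ on $D_\eps$.

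Next I would handle the jump term using the hypothesis $\int_0^1 \Lambda(dz)/z < \infty$. Because $\int_0^1 (\mathbf{1}_{\{u \in I_i(x)\}} - x_i)\, du = 0$, the compensator vanishes and the jump term of \eqref{SDE_strong} may be rewritten without compensation against $N(dt,du,dz)$. The hypothesis yields $\int_0^1 \int_0^1 z\, du\, \Lambda(dz)/z^2 = \int_0^1 \Lambda(dz)/z < \infty$, so the accumulated jump variation is a.s. finite on bounded time intervals. Moreover, setting $F_i(x,u,z) := z(\mathbf{1}_{\{u \in I_i(x)\}} - x_i)$ and using that the $u$-integrated symmetric difference $|I_i(x) \triangle I_i(y)|$ is controlled by $\sum_j |x_j - y_j|$, I would establish the $L^1$-Lipschitz estimate
\[
\int_0^1\int_0^1 |F_i(x,u,z) - F_i(y,u,z)|\, du\, \frac{\Lambda(dz)}{z^2} \leq C \sum_{j=1}^K |x_j - y_j|.
\]
With these Lipschitz controls in hand, a Picard-iteration/Gronwall argument (or directly the framework of \cite{li2012strong}) applied to the modified SDE obtained by replacing $(\mu, \zeta)$ with $(\tilde\mu, \tilde\zeta)$ in \eqref{SDE_strong} yields a pathwise unique global strong solution $\tilde X$. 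Since the modified and original coefficients agree on $D_\eps$, the stopped process $(\tilde X(t \wedge T_{(\eps, \tilde X)}))_{t \geq 0}$ solves \eqref{SDE_strong} on $[0, T_{(\eps,X)})$, and any other solution of \eqref{SDE_strong} on that interval must coincide with $\tilde X$ by pathwise uniqueness in the modified problem.

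The most delicate point in the argument is the $L^1$-Lipschitz estimate for the jump coefficient: the indicator $\mathbf{1}_{\{u \in I_i(x)\}}$ is not pointwise Lipschitz in $x$, and its regularity in $x$ is only recovered after the $u$-integration. Verifying that this $L^1$-type (rather than $L^2$-type) Lipschitz control is enough to close the Gronwall/Picard argument relies crucially on the finite-activity hypothesis $\int \Lambda(dz)/z < \infty$, which allows the jump integral to be treated as a pathwise absolutely-convergent sum rather than as a true stochastic integral; the boundedness of $\Delta_K$ and the geometric structure of the intervals $I_i(x)$ then make the bookkeeping manageable.
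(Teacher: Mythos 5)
Your proposal is correct and follows essentially the same route as the paper: localise away from the boundary of $\Delta_K$ so that $\zeta$ becomes Lipschitz (the paper does this by bounding its partial derivatives on $(\eps,1-\eps)^K$), combine this with the $L^1$-Lipschitz estimate on the jump kernel already established in the proof of Corollary \ref{cor_LiPu} — which is exactly where the hypothesis $\int_0^1 \Lambda(dy)/y<\infty$ enters — and invoke a standard strong existence and uniqueness result for jump SDEs (the paper cites Corollary 2.9 of \cite{xi2017jump} rather than running Picard--Gronwall by hand, but this is the same machinery). Your only slip is terminological: $\int_0^1 \Lambda(dz)/z<\infty$ gives finite total jump variation, not finite activity, though this does not affect the argument.
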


\subsection{Selection functions} \label{section_class_functions}
This section generalizes the ideas of \cite{CHS19} to the multidimensional setting.
As we have a large degree of 
freedom in the choice of the colouring rule, the class of selection functions we can obtain with this construction is very general. 
It is the content of the next  lemma:

\begin{lem} \label{cor1}
Let $N \in \N^*$, $\kappa >0$ and $\mu:\Delta_K\rightarrow \R^K$ 
be a continuous function such that there exists $\lambda>0$, with the property that 
$$g(x):=\frac{\kappa\mu( x)}{\lambda}+x \in \Delta_K, \quad \forall x \in \Delta_K. $$
Assume also that there is a unique (in law) solution to \eqref{SDE_strong}.
Then there exists a sequence of couples of distribution and colouring rule $(Q_N,C_N)$ such that 
the sequence of frequency processes $X^N_{\lfloor \kappa t /\rho_N \rfloor}$ converges to
$(X(t), t \geq 0)$ which has infinitesimal generator $\mathcal{A}$ given by \eqref{inf_gene}.
\end{lem}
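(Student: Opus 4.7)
The strategy is to exploit the freedom in the colouring rule when a large number of potential parents is sampled: by the law of large numbers, the empirical type-frequency $\hat x$ of such a sample approximates the true frequency $x$ arbitrarily well, so the offspring's colour can be prescribed as a continuous function of $\hat x$ that realises any prescribed continuous drift $\mu$ in the limit. The hypothesis $\mu(x)/\lambda + x \in \Delta_K$ is exactly what makes the natural candidate a valid probability distribution on $E$.

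\emph{Construction.} Given the target $\mu$ together with prescribed $\sigma \geq 0$ and finite measure $\Lambda$ on $[0,1]$, fix $\alpha \in (0, 1/2)$ and set $\kappa = \lambda$. Choose $\rho_N \to 0$ satisfying condition (iii) of Proposition \ref{propconv} (e.g.\ $\rho_N \sim 1/(\sigma N)$ when $\sigma > 0$, or $\rho_N = N^{-\gamma}$ with $\gamma \in (2\alpha, 1)$ when $\sigma = 0$) and $\gamma_N$ satisfying (ii). Pick any sequence $m_N \to \infty$ (say $m_N = \lceil \log N \rceil$) and take
\[
  Q_N(\{1\}) = 1 - \rho_N, \qquad Q_N(\{m_N\}) = \rho_N.
\]
Define $C_N$ on singletons by $c^N_z = \delta_z$ and, on a tuple $z = (z_1, \dots, z_{m_N})$ with empirical frequency vector $\hat x(z)_i := m_N^{-1}|\{j : z_j = i\}|$, by
\[
  c^N_z(i) = \hat x(z)_i + \frac{\mu_i(\hat x(z))}{\lambda}, \qquad i \in E,
\]
which is a valid probability distribution on $E$ thanks to the hypothesis on $\mu$.

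\emph{Verification and conclusion.} Since $\E[\hat x_i \mid x] = x_i$, the one-step drift satisfies
\[
  \frac{\mu_i^N(x)}{\rho_N} = \E\!\left[\frac{\mu_i(\hat x)}{\lambda} \,\Big|\, x\right].
\]
A Hoeffding-type concentration bound on the multinomial empirical vector combined with the uniform continuity of $\mu$ on the compact simplex yields $\mu_i^N/\rho_N \to \mu_i/\lambda$ uniformly in $x$, which is hypothesis (i). Conditions (ii)--(iv) hold by construction. Proposition \ref{propconv} then produces tightness of $(X^N_{\lfloor \kappa t/\rho_N \rfloor})$ and convergence of the generators $\mathcal A^N$ to an operator of the form \eqref{inf_gene}; the choice $\kappa = \lambda$ rescales time so that this limiting operator has exactly the prescribed drift $\mu$ (and the given $\sigma$, $\Lambda$). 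The assumed well-posedness of \eqref{SDE_strong} identifies the limit uniquely, so, as in Theorem \ref{theo_strong_SDE}, tightness together with generator convergence upgrades to weak convergence to the unique strong solution.

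\emph{Main obstacle.} The principal step will be the uniform convergence $\mu_i^N(x)/\rho_N \to \mu_i(x)/\lambda$, where the diverging sample size $m_N \to \infty$ is essential in turning a Bernstein-type polynomial approximation into an exact limit; its proof rests on quantitative multinomial concentration (Hoeffding) together with the uniform continuity of $\mu$ on $\Delta_K$. A delicate secondary point is that $m_N \to \infty$ makes $\E[K - 1 \mid K > 1] \to \infty$, in apparent tension with hypothesis (v) of Proposition \ref{propconv}; I would handle this either by a direct second-moment estimate on the one-step increment (which remains $O(\rho_N/N)$ in the present construction), or by first treating polynomial $\mu$ with bounded sample size and then approximating a general continuous $\mu$ by polynomials, passing to the limit via continuity of the SDE solution in its drift.
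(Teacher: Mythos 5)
Your construction is essentially the paper's own: the paper takes $Q_N=(1-\rho_N)\delta_1+\rho_N\delta_N$ and $c^N_z=g(z/N)$ with $g=\mu/\lambda+\mathrm{id}$, and concludes $p_i^N(x)=x_i+\rho_N(g_i(x)-x_i)+o(\rho_N)$ by the law of large numbers before invoking Theorem \ref{theo_strong_SDE}, exactly as you do with $m_N=\lceil\log N\rceil$ in place of $N$. The tension you flag with hypotheses (iv)--(v) of Proposition \ref{propconv} (a diverging sample size forces $\E[K_{(0,1)}-1\mid K_{(0,1)}>1]\to\infty$) is a genuine issue that the paper's own proof shares and does not address, so your proposed workarounds are a useful addition rather than a deviation.
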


The previous Lemma does not provide a colouring rule independent of $N$ leading to $\mu$ as a limit selection function.
This is done in the next lemma, but with a less general class of functions:

\begin{lem}\label{cor2}
Let $\kappa >0$, $n \in \N$ and $\mu:\Delta_K\rightarrow \R^K$ 
be a continuous function such that there exists $\lambda>0$, with the property that  
$$g(x):=\frac{\kappa\mu(x)}{\lambda}+x\in \Delta_K, \quad \forall x \in \Delta_K, $$
and $g$ is a polynomial function 
of degree $n$.  Let for each $i\in E$
$$
g_i( x)=\sum_{z=(z_1,...,z_K)}\binom{n}{z}\alpha^i_{ z}\prod_{r=1}^Kx_r^{z_r}
$$
be the representation of $g_i$ in the Bernstein basis. Then the choices 
$$Q_N=(1-\rho_N)\delta_1+\rho_N\delta_n$$
and 
$$c^N_{ z}(i)=\alpha^i_{ z}$$
for every $N$-dimensional vector $ z$ and $c^N_{ z}=\delta_{ z}$ for every one dimensional vector $z$ lead to the same conclusions 
as in the previous lemma.
\end{lem}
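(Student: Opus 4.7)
The plan is to verify the hypotheses of Proposition~\ref{propconv} for this explicit choice of $(Q_N,C_N)$, read off the limiting drift from the Bernstein expansion of $g$, and then invoke Theorem~\ref{theo_strong_SDE} under the well-posedness assumption for~\eqref{SDE_strong} inherited from Lemma~\ref{cor1}. Hypotheses (iv) and (v) of Proposition~\ref{propconv} are immediate: $Q_N(\,\cdot\,\mid K>1) = \delta_n$, so $\pi_{n-1}=1$, all other $\pi_k$ vanish, and $\beta = n-1 < \infty$. Hypotheses (ii) and (iii) are satisfied by a suitable choice of $\gamma_N$, $N$ and $\kappa$, exactly as in the proof of Lemma~\ref{cor1}.

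The core computation is hypothesis (i). By the convention $c^N_z = \delta_z$ on one-dimensional tuples, with probability $1-\rho_N$ the focal individual copies the type of its single uniform parent, producing type $i$ with probability $x_i$. With probability $\rho_N$ it samples $n$ potential parents uniformly and independently in the previous generation; viewed through its composition $z=(z_1,\dots,z_K)$ with $\sum_r z_r = n$, this sample is multinomial with parameters $(n;x_1,\dots,x_K)$, so $\P(\text{composition}=z) = \binom{n}{z}\prod_r x_r^{z_r}$, and conditionally the individual is coloured $i$ with probability $\alpha^i_z$. Summing yields exactly the Bernstein representation of $g_i$:
\[
 p^N_i(x) \;=\; (1-\rho_N)\,x_i + \rho_N\,g_i(x), \qquad \mu^N_i(x) \;=\; \rho_N\bigl(g_i(x)-x_i\bigr) \;=\; \frac{\rho_N}{\lambda}\,\mu_i(x).
\]
Hence $\rho_N^{-1}\mu^N_i(x) \to g_i(x)-x_i$, and the multiplicative constant $\lambda$ is absorbed into the free time-rescaling parameter $\kappa$ exactly as in the proof of Lemma~\ref{cor1}, so that the drift in the limiting generator~\eqref{inf_gene} is identified with $\mu$. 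Proposition~\ref{propconv} then provides tightness and generator convergence, and the assumed well-posedness of~\eqref{SDE_strong} upgrades this to the claimed weak convergence.

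The only genuine subtlety — and the reason this lemma is strictly less general than Lemma~\ref{cor1} — is that the formula $c^N_z(i) := \alpha^i_z$ tacitly requires $\{\alpha^i_z\}_{i\in E}$ to be a probability distribution on $E$ for every composition $z$, i.e.\ $\alpha^i_z \in [0,1]$ and $\sum_i \alpha^i_z \leq 1$. This is a nontrivial algebraic restriction on $g$, strictly stronger than the condition $g(\Delta_K)\subset\Delta_K$ already assumed; once granted, the whole argument reduces to recognising that uniform multinomial sampling produces precisely Bernstein polynomial expansions.
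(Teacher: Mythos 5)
Your proof is correct and follows essentially the same route as the paper's (which simply notes that by construction $p^N_i(x)=(1-\rho_N)x_i+\rho_N g_i(x)$, so $\rho_N^{-1}\mu^N_i(x)=g_i(x)-x_i$ exactly, and then repeats the argument of the previous lemma); your identification of multinomial sampling with the Bernstein expansion is precisely the intended observation. Your closing remark that $c^N_z(i):=\alpha^i_z$ implicitly requires the Bernstein coefficients $\{\alpha^i_z\}_i$ to form probability distributions is a fair point that the paper leaves tacit, but it does not affect the correctness of the argument.
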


\subsection{On the alleles extinction and fixation} \label{section_alleles_ext}

The end of Section \ref{sectiongeneralresults} deals with extinction and fixation properties of the 
frequency process $(X(t), t \geq 0)$. 
We first provide a sufficient condition for one of the alleles to reach fixation in a finite time.

Before stating it, we introduce a parameter which depends on the mean number of potential parents $\beta$ (defined in Proposition \ref{propconv}) 
and on the measure $\Lambda$:
\begin{equation}\label{def_kappa_star}
 \kappa^* :=\frac{1}{\beta}\int_0^1\log(1-y)\frac{\Lambda(dy)}{y^2}.
\end{equation}

\begin{pro} \label{pro_suff_cond_fix}
 Assume that there are no mutations (the offspring inherits the type of one of the potential parents) and 
 the infinitesimal generator of the frequency process $(X(t), t \geq 0)$ is given by \eqref{inf_gene}. 
 Then there is almost sure fixation 
 of one of the alleles in finite time if at least one of the two following conditions is satisfied:
 \begin{enumerate}
  \item[(i)] $\sigma>0$
  \item[(ii)] $\kappa < \kappa^* <\infty$.
 \end{enumerate}
\end{pro}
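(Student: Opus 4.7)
\medskip
\noindent\textbf{Proof plan.}
The strategy is to show that, starting from any $x \in \Delta_K$, some coordinate $X_i(t)$ is absorbed at $0$ in finite time almost surely, and then iterate. Under the no-mutation hypothesis each face $\{x_i = 0\}$ is invariant: neither the drift $\mu$ (which must vanish at $x_i = 0$ since no potential parent of type $i$ can be sampled when no individual of that type is present) nor the support of the $\Lambda$-jumps $(1-z)x + z\mathbf{e}_j$ for $j \neq i$ leaves this face. Consequently, once one coordinate is absorbed, the restricted process is governed by a generator of the same form \eqref{inf_gene} in dimension $K-1$, and after at most $K-1$ such extinctions the process reaches a vertex of $\Delta_K$, which is precisely fixation of a single allele.

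To handle the base step I use the Lyapunov function $V_i(x) := \log x_i$, stopped before reaching $\{x_i = 0\}$ (concretely, applying Dynkin's formula on $[0, \tau^\varepsilon_i]$ with $\tau^\varepsilon_i := \inf\{t \geq 0 : X_i(t) \leq \varepsilon\}$ and then letting $\varepsilon \downarrow 0$). A direct expansion of $\mathcal{A}V_i(x)$ from \eqref{inf_gene} produces three contributions: the selection term $\mu_i(x)/x_i$, which tends to $0$ as $x_i \to 0$ by no-mutation together with the continuity of $\mu$; the Wright--Fisher diffusion term $-\sigma(1-x_i)/(2x_i)$; and the $\Lambda$-jump term, which decomposes according to whether the reproducing type equals $i$ or not. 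The sub-case $j \neq i$ contributes $x_j \int_0^1 \log(1-z)\,\Lambda(dz)/z^2$ for each $j$, summing to $(1-x_i)\beta\kappa^*$, while the $j = i$ sub-case $x_i \int_0^1 [\log((1-z)x_i + z) - \log x_i]\Lambda(dz)/z^2$ remains bounded as $x_i \to 0$ using $x_i \log x_i \to 0$.

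In case $(i)$, $\sigma > 0$, the diffusion piece diverges to $-\infty$ at the order $-\sigma/(2x_i)$ as $x_i \to 0$ and dominates every other contribution, so there exist $c, \eta > 0$ with $\mathcal{A}V_i(x) \leq -c/x_i$ on $\{0 < x_i < \eta\}$. In case $(ii)$, $\sigma = 0$, and the condition relating $\kappa$ and $\kappa^*$ ensures that the quantity $|\mu_i(x)/x_i|$ (bounded on the interior under the implicit regularity of $\mu$) is strictly smaller than $|\beta\kappa^*|$, so that again $\mathcal{A}V_i(x) \leq -c < 0$ on a neighbourhood of $\{x_i = 0\}$ in the interior of $\Delta_K$. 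In both cases, Dynkin's formula combined with the strong Markov property and a Borel--Cantelli argument yield that $X_i$ is absorbed at $\{0\}$ in finite time a.s., completing the inductive step.

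The main obstacle will be the uniform control of the selection contribution $\mu_i(x)/x_i$ near $\{x_i = 0\}$: the no-mutation hypothesis only gives that $\mu_i$ vanishes on this face, and turning this into a quantitative bound of the correct order requires enough regularity of $\mu$ (for example the Lipschitz regularity automatically provided by the polynomial colouring rules of Lemma \ref{cor2}), or, failing that, a more delicate choice of Lyapunov function such as $x_i^\alpha$ for a well-chosen $\alpha \in (0,1)$. Relatedly, pinning down the precise form of the condition in $(ii)$ amounts to matching the constant arising from the drift bound with the quantity $\beta\kappa^*$ produced by the jumps, which is where the normalising factor $\beta$ in the definition \eqref{def_kappa_star} of $\kappa^*$ naturally enters.
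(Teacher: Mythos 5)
Your plan is a forward-in-time Lyapunov argument on $\log X_i$, which is a genuinely different route from the paper's, but it has a gap that I do not think can be closed within this framework: a negative value of $\mathcal{A}\log X_i$, or even $\mathbb{E}[\log X_i(t)]\to-\infty$, only yields that $X_i$ approaches $0$ (in a weak sense); it does not yield absorption at $0$ \emph{in finite time}, which is what the proposition asserts. Compare with the deterministic flow $\dot x_i=-c\,x_i$, for which $\log x_i$ decreases at a constant rate but $x_i$ never vanishes. Case (ii) is exactly of this type: $\sigma=0$, each individual $\Lambda$-jump multiplies $x_i$ by $(1-z)>0$ when the reproducing type is not $i$, so no single jump reaches the face $\{x_i=0\}$, and accessibility of that face in finite time is an accumulation-of-infinitely-many-jumps statement which is equivalent to a positive-recurrence/coming-down-from-infinity property of the dual line-counting process --- precisely the information your generator computation does not see. (In case (i) the singular term $-\sigma(1-x_i)/(2x_i)$ does make the boundary accessible, but even there your ``Dynkin + strong Markov + Borel--Cantelli'' step is the entire content of the proof and would require a scale-function or comparison argument, not merely a sign on $\mathcal{A}\log X_i$.) Secondary issues: the quantitative bound on $\mu_i(x)/x_i$ and its matching against $\beta\kappa^*$ is left open, and the induction over faces needs the restricted process to satisfy the same hypotheses.

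The paper's proof avoids all of this by going backwards in time: under (i) or (ii) the ancestral line-counting process $(D_t)$ with generator \eqref{inf_gen_count_proc} is positive recurrent (by the results cited from \cite{griffiths2014lambda} and \cite{Foucart2013lambda}), hence hits $1$ in finite time almost surely; at that moment the whole population has a single common ancestor, and the no-mutation hypothesis forces every individual to carry that ancestor's type, i.e.\ fixation. This is where $\kappa^*$ comes from --- it is the recurrence threshold for $(D_t)$, balancing branching at rate $\kappa\beta$ against $\Lambda$-coalescence --- and it explains why (i) and (ii) enter as parallel sufficient conditions. If you want to keep a purely forward argument, you would essentially have to reprove this duality statement, so I would recommend adopting the ancestral-process route.
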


\begin{rem}
 Following Definitions \ref{GenGra} and \ref{forward}, the assumption 'there are no mutations' can be written rigorously:
 $$ \P(f(v)=i)=0 \quad \text{if} \quad f(U_{v,k})\neq i \ \forall \ (v,g,k)\quad \text{such that} \quad \{ v,(g-1,U_{v,k}) \} \in \mathcal{E}. $$
\end{rem}

We end this section with a result on the way alleles get extinct in the case of a Wright-Fisher diffusion (without jumps). 
This is an extension of a recent result by Coron and coauthors who proved this result in the neutral case \cite{coron2017perpetual}.

\begin{pro}\label{pro_succ_ext}
 Assume that the SDE \eqref{SDE_strong} is well posed and denote by $(X(t), t \geq 0)$ a solution.
 Suppose that $\Lambda\equiv0$, $\sigma>0$ and that $\mu$ satisfies
 $$ \mu_i(x) = x_i (1-x_i) s_i(x), \quad \text{for all} \ x \in \Delta_K $$
 where $s=(s_1,...,s_K)$ is a continuous function on $\Delta_K$. Then one of the alleles ultimately fixates, and before its fixation, the population experiences successive (and non 
 simultaneous) allele extinctions.
\end{pro}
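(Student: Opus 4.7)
The statement splits into two independent claims: (a) one allele ultimately fixates in finite time, and (b) before fixation, the remaining alleles go extinct at pairwise distinct times. I would handle them in turn. Claim (a) is an immediate consequence of Proposition \ref{pro_suff_cond_fix}(i), since $\sigma>0$ is one of the two sufficient conditions listed there. So all the work is in claim (b).

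For (b), it suffices by a union bound over pairs, together with an induction on the number of surviving alleles and the strong Markov property (restarting in the reduced simplex after each extinction), to show that for any two distinct alleles $i, j \in E$, the probability that $X_i$ and $X_j$ hit $0$ at the same time is zero. My plan is to reduce this to the neutral case $\mu \equiv 0$ by a Girsanov change of measure and then to invoke a one-dimensional time-change argument on the pair $(X_i, X_j)$. The form $\mu_i(x) = x_i(1-x_i)s_i(x)$ with $s$ continuous on the compact simplex $\Delta_K$ is tailored for this: the drift has the same order as the Wright-Fisher diffusion coefficient, so the vector $h$ determined (via the lower-triangular structure \eqref{form_zeta}) by $\sqrt{\sigma}\,\zeta(x) h(x) = \mu(x)$ is bounded on any region that stays at distance $\geq \varepsilon$ from the boundary of $\Delta_K$. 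Novikov's condition then holds on such a region, yielding an equivalent measure under which $X$ solves the neutral SDE up to the first exit of that region.

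Under the neutral law, set $S(t) = X_i(t) + X_j(t)$ and, on $\{S>0\}$, $R(t) = X_i(t)/S(t)$. A direct It\^o computation using $\sigma_{\ell m}(x) = (\mathbf{1}_{\ell=m}-x_m)x_\ell$ shows that $S$ and $R$ are orthogonal local martingales with
\[ d\langle S, S\rangle_t = \sigma\, S(t)(1-S(t))\, dt, \qquad d\langle R, R\rangle_t = \sigma \, R(t)(1-R(t))/S(t) \, dt .\]
The time change $\theta(t) := \int_0^t \sigma/S(u)\, du$ is strictly increasing and tends to $+\infty$ as $t$ approaches $T_S := \inf\{t : S(t)=0\}$ (the integral diverges because $S$ behaves like a squared Bessel process near $0$). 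After this time change, $R$ is a standard neutral one-dimensional Wright-Fisher diffusion on $[0,1]$, which hits $\{0,1\}$ at a finite $\theta$-time almost surely. Since $\theta(T_S) = +\infty$, the exit of $R$ from $(0,1)$ must occur at a $t$-time strictly smaller than $T_S$, meaning one of $X_i$, $X_j$ reaches $0$ strictly before the other. This rules out simultaneous extinction under the neutral law, and hence also under the original law by equivalence of measures.

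The main technical obstacle is the Girsanov step, since $h$ is typically unbounded near the boundary of $\Delta_K$. The proof requires a careful localization at stopping times $T_\varepsilon := \inf\{t : \min_\ell X_\ell(t) \leq \varepsilon\}$, an application of Girsanov on $[0, T_\varepsilon]$, and then passage to the limit $\varepsilon \to 0$ while ensuring that the event \emph{simultaneous extinction strictly after $T_\varepsilon$} has vanishing probability uniformly in $\varepsilon$. A secondary subtlety is verifying divergence of $\int_0^{T_S} 1/S(u)\, du$, which I would establish via comparison with a one-dimensional Feller diffusion and the classical fact that $\int 1/B_s^2 \, ds$ diverges at the hitting time of $0$.
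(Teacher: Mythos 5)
Your first step (fixation via Proposition \ref{pro_suff_cond_fix}) and your core geometric idea for non-simultaneity (normalize a pair/group of coordinates, time-change by the inverse of their total mass, and use that the time change diverges at the extinction time so the ratio process must exit $(0,1)$ strictly earlier) are exactly the ingredients of the paper's proof, which extends Coron et al.\ \cite{coron2017perpetual}. The gap is in how you handle the drift. You propose to remove it by Girsanov and then argue in the neutral case, but the reduction does not close. The measures are equivalent only on each $\mathcal{F}_{T_\varepsilon}$ with $T_\varepsilon=\inf\{t:\min_\ell X_\ell(t)\leq\varepsilon\}$, whereas the event ``two alleles hit $0$ at the same time'' lives on $\mathcal{F}_{T_0^-}=\bigvee_\varepsilon\mathcal{F}_{T_\varepsilon}$ (or beyond, for extinctions after the first one), and local equivalence does not pass to the generated $\sigma$-field: one needs uniform integrability of the density martingale, i.e.\ a global Novikov-type condition. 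That condition genuinely fails here. Solving $\sqrt{\sigma}\,\zeta(x)h(x)=\mu(x)$ with $\zeta$ as in \eqref{form_zeta} gives, e.g., $h_2$ of order $x_1x_2/\sqrt{x_2(1-x_1-x_2)}$, so $|h|^2$ blows up like the reciprocal of the distance to the relevant face, and the integral $\int_0^{T}|h(X_s)|^2ds$ diverges at the corresponding hitting time --- by precisely the same divergence (the paper's Lemma \ref{lemmaZ}) that you need elsewhere to make the time change explode. So the Girsanov density degenerates on the boundary-hitting events you care about, absolute continuity of the original law with respect to the neutral one fails there, and the phrase ``ensuring that simultaneous extinction strictly after $T_\varepsilon$ has vanishing probability uniformly in $\varepsilon$'' is circular: that event does not depend on $\varepsilon$ and is exactly what you are trying to prove is null.

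The repair is to keep the drift rather than kill it, and this is what the paper does. The hypothesis $\mu_i(x)=x_i(1-x_i)s_i(x)$ with $s$ bounded is stable under the normalization-plus-time-change operation: Lemma \ref{lemmaZ} proves the divergence $\int_0^{T_1}(1-Z_s)^{-1}ds=\infty$ directly for a Wright--Fisher diffusion \emph{with} a bounded selection term (via It\^o applied to $-\ln(1-x)$ and a time-changed-Brownian-motion argument), and Lemma \ref{leminduction} computes that the time-changed ratio process is again a Wright--Fisher diffusion with a drift of the same form $y_i(1-y_i)\tilde s_i$, $\tilde s$ bounded, in one dimension fewer; the conclusion then follows by induction on $K$. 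Your own $(S,R)$ computation can be salvaged the same way: the drift of $R$ after the time change is $(\,(1-R)\mu_i-R\mu_j\,)/(\sigma S)$, which your hypothesis bounds by a constant times $R(1-R)$, so the time-changed ratio is a one-dimensional Wright--Fisher diffusion with bounded selection and still exits $(0,1)$ in finite time almost surely --- no change of measure needed. As written, however, the proposal rests on a measure equivalence that fails, so it has a genuine gap.
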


\begin{rem} \label{rem_xi}
 Notice that as soon as we impose that the type of each individual is among the types of its potential parents (no mutations), $\mu_i(x)$ 
 will be of the form $\mu_i(x)=x_i \tilde{s}_i(x) $, with $\tilde{s}$ bounded.
 Indeed, by definition, for $i \in E$, $N \in \N^*$ and $x \in \Delta_K$,
 $$ \mu_i^N(x)= (Q_N(1)-1)x_i + \sum_{k=2}^\infty Q_N(k) \P(f(v)=i|v \in V_1,X_0^N=x,K_v=k). $$
But, as a type $i$ individual has to be sampled among the potential parents for $v$ to be of type $i$, we get
$$ \P(f(v)=i|v \in V_1,X_0^N=x,K_v=k)\leq 1 - (1-x_i)^k = x_i \left( 1+ (1-x_i)+...+ (1-x_i)^{k-1} \right). $$
Hence in the absence of mutation, to check the conditions of Proposition \ref{pro_succ_ext} on $\mu$, we will just have to prove 
that $\mu_i(x)/(1-x_i)$ is bounded.
 \end{rem}

The next section is dedicated to examples of selection functions we can consider with our models. All of them 
satisfy the assumption of Proposition \ref{pro_succ_ext};

\begin{lem} \label{lem_succ_ext}
If $\mu$ is of the form \eqref{shape_mui}, \eqref{mu_logi}, \eqref{mu_lizard}, \eqref{mu_negfreqdep} or \eqref{mu_posfreqdep}, 
$\Lambda\equiv 0$ and 
the SDE \eqref{SDE_strong} is well posed, then the assumptions of Proposition \ref{pro_succ_ext} are satisfied. 
\end{lem}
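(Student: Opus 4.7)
The plan is to reduce, for each of the five functional forms, the verification of the hypotheses of Proposition \ref{pro_succ_ext} to extracting a factor $(1-x_i)$ from $\mu_i$. The conditions $\Lambda\equiv 0$ and well-posedness of \eqref{SDE_strong} are assumed by hypothesis, while $\sigma>0$ will hold in each of the five applications by construction, so the only nontrivial requirement is the identity $\mu_i(x) = x_i(1-x_i) s_i(x)$ with $s_i$ continuous on $\Delta_K$.

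First I would invoke Remark \ref{rem_xi}: whenever the real parent is selected from among the potential parents, one automatically has $\mu_i(x) = x_i \tilde s_i(x)$ with $\tilde s_i$ bounded on $\Delta_K$, and the verification of the hypotheses of Proposition \ref{pro_succ_ext} reduces to checking that $\mu_i(x)/(1-x_i)$ is bounded. I would therefore start by checking that each of the constructions leading to \eqref{shape_mui}, \eqref{mu_logi}, \eqref{mu_lizard}, \eqref{mu_negfreqdep} and \eqref{mu_posfreqdep} uses a mutation-free colouring rule; this is immediate from the descriptions in Section \ref{sectionapplications}, where in each case the real parent is one of the sampled potential parents. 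Consequently the factor $x_i$ is present for free, and it remains only to produce the factor $(1-x_i)$ and to verify the continuity of the quotient.

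The extraction of the factor $(1-x_i)$ rests on the following simple observation. In the mutation-free regime, the vertex $\mathbf{e}_i$ of $\Delta_K$ is an absorbing state for the frequency process: when every individual already carries type $i$, every potential parent is of type $i$, and so the colouring rule produces type $i$ almost surely. Hence $\mu_i(\mathbf{e}_i)=0$, i.e.\ $\tilde s_i$ vanishes when $x_i=1$. Since each of the five specified forms is polynomial in $x$ (or, for the frequency-dependent examples, rational with a denominator that stays bounded away from zero on $\Delta_K$), division by $(1-x_i)$ yields an explicit continuous $s_i$. I would carry this division out concretely in each of the five cases, writing down the resulting $s_i$.

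The main obstacle is purely algebraic book-keeping through the five forms. The only potentially delicate point is the frequency-dependent cases \eqref{mu_negfreqdep} and \eqref{mu_posfreqdep}: there I would have to check that the weighting denominators remain uniformly bounded away from zero on $\Delta_K$, ensuring that $s_i$ extends continuously up to the boundary. The cyclic Rock--Paper--Scissors form \eqref{mu_lizard} is transparent by symmetry, while \eqref{shape_mui} and \eqref{mu_logi} are low-degree polynomials for which the factorisation is direct. No deeper probabilistic input is needed beyond the mutation-free structure already guaranteed by the construction.
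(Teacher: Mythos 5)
Your reduction via Remark \ref{rem_xi} to the boundedness of $\mu_i(x)/(1-x_i)$ is exactly the paper's starting point, and your observation that the mutation-free structure supplies the factor $x_i$ is correct. The gap is in how you extract the factor $(1-x_i)$. You argue that $\mu_i(\mathbf{e}_i)=0$ together with polynomiality implies divisibility by $(1-x_i)$; this is false as a general principle --- a multivariate polynomial vanishing at the single point $\mathbf{e}_i$ need not be divisible by the linear form $1-x_i$ (e.g.\ $x_ix_j$ with $j\neq i$ vanishes at $\mathbf{e}_i$ but is not so divisible). What actually makes the quotient bounded on $\Delta_K$ is the simplex inequality $\sum_{j\neq i}x_j\leq 1-x_i$, which is the estimate the paper uses in every case (e.g.\ $|x_{mod_3(i-1)}-x_{mod_3(i+1)}|\leq 1-x_i$ for \eqref{mu_lizard}, $\sum_{j\neq i}x_j^2\leq 1-x_i$ for \eqref{mu_negfreqdep}); you never invoke it, so your ``division yields an explicit continuous $s_i$'' is unsupported. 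Note also that continuity of the quotient at the vertex genuinely fails in general ($x_j/(1-x_i)$ has no limit at $\mathbf{e}_i$ along different faces), so the most one can get --- and what Remark \ref{rem_xi} and the paper's proof settle for --- is boundedness.

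The more serious omission is the transitive case \eqref{shape_mui}, which is not a polynomial but an infinite series $\kappa\sum_k\pi_k\bigl((x_0+\dots+x_i)^{k+1}-(x_0+\dots+x_{i-1})^{k+1}-x_i\bigr)$. Your blanket polynomial argument does not apply there, and this is precisely where the paper does real work: it splits $\mu_i=\alpha_i+\beta_i$ using the factorisation $a^{k+1}-b^{k+1}=(a-b)\sum_{l}a^{l}b^{k-l}$, bounds each piece by $\kappa x_i(1-x_i)\sum_k k\pi_k$, and invokes the hypothesis $\beta=\sum_k k\pi_k<\infty$ from Proposition \ref{propconv}(v) to conclude. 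Without a bound that is uniform in $k$ and summable against $\pi$, this case is not covered. Finally, \eqref{mu_negfreqdep} and \eqref{mu_posfreqdep} are plain cubics, so the ``weighting denominators'' you propose to control do not exist; that part of your plan addresses a non-issue while the actual estimates are left undone.
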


\section{Applications} \label{sectionapplications}

In this section, we illustrate the generality of our construction by applying it to model various ecological interactions. 
We also provide, when the calculations are not too cumbersome, the selective function obtained as a limit.

\subsection{Transitive ordering}\label{TO}
It is the classical assumption in population genetics models, consisting in ordering transitively the fitnesses of the different alleles. 
In this scheme, $1$ has a smaller fitness than $\{2,...,K\}$, $2$ has a larger fitness than $1$, but a smaller fitness than $\{3,...,K\}$, and 
so on. 
Thus the colouring rule consists in taking one parent among the potential parents with the highest type, 
$c_z(.)=\delta_{\sup_z}$, where $\sup_z$
is the supreme norm of the vector $z \in E$.

If we consider only two alleles and that the number of potential parents is one or two, we find as a limit the classical Wright-Fisher 
process with selection (plus jumps), where the frequency $(Y_t, t \geq 0)$ of type $1$ individuals is the 
unique strong solution to:
$$ dY_t = -\kappa Y_t(1-Y_t)dt+ \sqrt{\sigma Y_t(1-Y_t)}dW_t+ \int_0^1 \int_0^1 y 
 \left( \mathbf{1}_{\left\{u < Y_{t-} \right\}}-Y_{t-} \right) 
 \tilde{M}(dt,du,dz) $$
where $\tilde{M}$ is a compensated Poisson measure with intensity $ dt du \Lambda (dz)/z^2 $. If we allow the number of potential parents to exceed two, but still with only two alleles, it amounts to the model described and studied in \cite{casanova2018duality}, 
where the selective function is 
\begin{equation} \label{shapemuGCS} \mu_1(x)=\mu_1(x_1,1-x_1)=  \sum_{k=1}^\infty \pi_k \left( x_1^{k+1}-x_1 \right) ,\end{equation}
 where we recall that $\pi$ has been introduced in Proposition \ref{propconv}.

Finally, if we consider a finite number of alleles larger than two, we obtain the following selective function for $i \in E$:
\begin{equation}  \mu_i(x) = \sum_{k=1}^\infty  \pi_k \left((x_0+...+x_i)^{k+1}-(x_0+...+x_{i-1})^{k+1}-x_i\right)\label{shape_mui}. \end{equation}
This can be deduced from \eqref{shapemuGCS}. Indeed for any $i \in E$, if we divide the alleles into two groups: $1$ to $i$ and $i+1$ to $K$, 
the relative frequencies of the two groups evolve as if there were only two types in the population. Doing the same with the groups
$1$ to $i+1$ and $i+2$ to $K$, and so on, allows us to conclude. \\

By extending results in \cite{casanova2018duality} we can 
provide the probability of fixation of the different alleles.
In \cite{casanova2018duality} the authors prove moment duality between the frequency process of the weakest allele and a process $(D_t, t \geq 0)$ named the
ancestral process and corresponding to the limit of the number of potential ancestors of a sample taken at a given generation. 
The ancestral process has the following generator:
\begin{equation} \label{inf_gen_count_proc} Lf(n)=
  \kappa \sum_{i=2}^\infty \pi_i  [f(n+i-1)-f(n)]+ \sigma  {n\choose 2} [f(n-1)-f(n)]+ \sum_{k=2}^n {n \choose k} \lambda_{nk}[f(n-k+1)-f(n)],
\end{equation}
for every $n \in \N$ and $f:\N \to \R$ twice continuously differentiable, where we recall that $\pi$ has been defined in Proposition \ref{propconv},
and for $2 \leq k \leq n$, $\lambda_{nk}$ is given by:
$$ \lambda_{nk}= \int_0^1 y^k (1-y)^{n-k} \frac{\Lambda(dy)}{y^2}. $$
It appears that the behaviour of $(D_t, t \geq 0)$ provides the long time equilibrium of the frequency process $(X(t), t \geq 0)$.
Recall the definition of $\kappa^*$ in \eqref{def_kappa_star}. Then we have the two following possible long time behaviours:

\begin{lem} \label{lem_final_state_transi}
Let us assume that the hypotheses of Proposition \ref{propconv} 
 are satisfied and that $\mu$ is given by \eqref{shape_mui}.
 Denote by $X$ the process with generator $\mathcal{A}$ defined in \eqref{inf_gene}.
 Then
 \begin{enumerate}
  \item[(i)] If $\kappa < \kappa^*<\infty$, $(D_t, t \geq 0)$ has a unique stationary distribution $\nu$ and for $i \in E$ 
  and $x \in \Delta_K$, 
   $$ \P_x\left(\lim_{t \to \infty}X_i(t)=1\right)= \phi_\nu(x_0+...+x_i)-\phi_\nu(x_0+...+x_{i-1}), $$
  where $\phi_\nu$ is the probability generating function of $\nu$.
  \item[(ii)] If $\kappa \geq \kappa^*$, and if we denote by $S$ the maximal label of alleles present at the beginning:
  $$ S:= \sup \{ i \in E, x_i>0\}, $$
  we get
  $$ \P_x\left(\lim_{t \to \infty}X_S(t)=1\right)=1.  $$
 \end{enumerate}
\end{lem}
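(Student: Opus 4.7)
The plan is to reduce the multidimensional transitive model to the one-dimensional setting studied in \cite{casanova2018duality} and then exploit the moment duality with the ancestral process $(D_t)$ whose generator is displayed in \eqref{inf_gen_count_proc}. For each $i \in E$ I introduce the partial sum
$$Y^{(i)}_t := X_1(t)+\cdots+X_i(t), \qquad y^{(i)} := x_1+\cdots+x_i,$$
matching the convention $x_0=0$ used in \eqref{shape_mui}. The first step is to verify, either from the SDE \eqref{SDE_strong} or directly from the generator \eqref{inf_gene} with drift \eqref{shape_mui}, that $Y^{(i)}$ is itself a Markov process having the same law as the weak-allele frequency in the two-type transitive model of \cite{casanova2018duality}, with parameters $(\kappa,\sigma,\Lambda,\pi)$ and initial condition $y^{(i)}$. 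This is the formal counterpart of the grouping argument alluded to just after \eqref{shape_mui}: lumping $\{1,\dots,i\}$ against $\{i+1,\dots,K\}$ preserves Markovianity precisely because the transitive colouring rule depends only on the maximum index in the sampled multiset.

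Second, I invoke the moment duality from \cite{casanova2018duality}: for every $n\in\N^*$,
$$\E_{y^{(i)}}\bigl[(Y^{(i)}_t)^n\bigr] = \E_n\bigl[(y^{(i)})^{D_t}\bigr].$$
By Proposition \ref{pro_suff_cond_fix}, under either hypothesis of the lemma some allele almost surely fixes in finite time, so $Y^{(i)}_t$ converges almost surely to a $\{0,1\}$-valued limit $Y^{(i)}_\infty$. Taking $n=1$ and passing to the limit by bounded convergence yields
$$\P_x\bigl(Y^{(i)}_\infty=1\bigr) = \lim_{t\to\infty}\E_1\bigl[(y^{(i)})^{D_t}\bigr].$$

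The third step is to analyze the long-time behaviour of $(D_t)$, extending the one-dimensional arguments of \cite{casanova2018duality}. A Foster--Lyapunov computation with test function $f(n)=\log n$, together with the explicit form of the coalescence rates $\lambda_{nk}$, should identify $\kappa^*$ from \eqref{def_kappa_star} as the exact recurrence threshold: when $\kappa<\kappa^*<\infty$ the coalescent part dominates the selective upward jumps and $(D_t)$ is positive recurrent with unique stationary distribution $\nu$, so by standard ergodic theory for countable-state continuous-time Markov chains one has $\lim_{t\to\infty}\E_n[y^{D_t}]=\phi_\nu(y)$ for every $y\in[0,1]$; when $\kappa\geq\kappa^*$ the chain escapes to infinity and $\lim_{t\to\infty}\E_n[y^{D_t}]=0$ for every $y<1$.

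Finally, because a unique allele fixes, $\{\lim_t X_i(t)=1\}$ coincides with $\{Y^{(i)}_\infty=1\}\setminus\{Y^{(i-1)}_\infty=1\}$, so
$$\P_x\bigl(\lim_t X_i(t)=1\bigr) = \P_x\bigl(Y^{(i)}_\infty=1\bigr)-\P_x\bigl(Y^{(i-1)}_\infty=1\bigr),$$
and substituting the limits from the previous step gives the formula in part (i). For part (ii), $y^{(S)}=1$ forces $Y^{(S)}_t\equiv 1$, while $y^{(S-1)}<1$ together with the transient regime of $(D_t)$ gives $\P_x(Y^{(S-1)}_\infty=1)=0$, whence $\P_x(\lim_t X_S(t)=1)=1$. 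The main obstacle will be the third step: pinpointing the exact threshold $\kappa^*$ requires a careful large-$n$ expansion of the drift of $\log n$ under the rates $\lambda_{nk}$, and this is precisely where the integral $\int_0^1 \log(1-y)\Lambda(dy)/y^2$ emerges.
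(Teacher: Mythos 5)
Your proposal follows essentially the same route as the paper: lump the types into $\{1,\dots,i\}$ versus $\{i+1,\dots,K\}$ to reduce to the two-type transitive model, then use the duality with the ancestral process $(D_t)$ and subtract consecutive partial sums. The paper simply imports the entire second and third steps of your argument (the moment duality, the identification of $\kappa^*$ as the recurrence/transience threshold for $D$, and the resulting fixation probabilities $\phi_\nu(y)$ resp.\ $0$) from Lemma 4.7 of \cite{casanova2018duality}, so the ``main obstacle'' you flag at the end is not actually re-proved here --- it is exactly the content of that cited lemma, and you would be expected to cite it rather than redo the Foster--Lyapunov computation. One small inaccuracy: you invoke Proposition \ref{pro_suff_cond_fix} to get almost sure fixation ``under either hypothesis of the lemma'', but that proposition requires $\sigma>0$ or $\kappa<\kappa^*<\infty$, so it does not cover case (ii) when $\sigma=0$; the almost sure existence of the limit $Y^{(S-1)}_\infty$ in that regime must instead come from the one-dimensional fixation result of \cite{casanova2018duality} (which is how the paper handles point (ii), by observing that $X_S$ behaves as the selected allele $X_1$ there).
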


This result says that if the selection function is not strong enough ($\kappa$ small) or if the variance generated by the choice 
of parents in the previous generation ($\sigma>0$) or extreme 
reproductive events is large enough, the latter can override the selection, and a deleterious allele may fixate.

\begin{rem}
 In the case of transitive fitnesses ($\mu$ given by \eqref{shape_mui}), we could consider more general 
 extreme reproductive events (more precisely $\Xi$ reproductive events, see \cite{casanova2018duality}). We would 
 still obtain the statement (3) of Corollary \ref{cor_LiPu} and Lemma \ref{lem_final_state_transi}.
We do not provide details here for the sake of readability.
 \end{rem}
\begin{rem}
The classic moment duality method that we used here cannot be applied if the coloring rule is not transitive. However, the notion of ancestry can still be useful in more general frameworks as we will see below.
 \end{rem}
\subsection{Transitive ordering with mutations}
We can still consider that the highest type is the best when choosing among potential parents, but add mutations with some probability. 
As a consequence, the offspring can carry a type different from the types of all of its potential parents. 
Notice that this colouring rule is not deterministic. The form of the mutation probability may depend on what we want to model 
(Muller's rachet, fixation of a beneficial mutant,...).

\subsection{Logistic competition}
We want to model competitive interactions, which can depend on the types of the competing individuals.
These kinds of competitive interactions have been widely studied by ecologists and in the setting of varying size stochastic population models,
as they allow to model for instance the preference for different types of resources or spaces depending on the individual's type, 
and lead to non-monotonic dynamics (see \cite{meleard2009trait,champagnat2011polymorphic} for instance).
To model these interactions, we assume that $K_v$ is only supported on $\{1,2\}$, and that the assumptions of Proposition \ref{propconv} are satisfied.
When $K_v=1$ the real parent is the potential parent sampled uniformly at random in the previous generation by $v$. 
When $K_v=2$ and the two potential parents have the same type, the real parent is chosen among them with probability $1/2$ for each. 
Now assume that they have different types that we denote by $i$ and $j$. Then the potential parent of type $i$ 
transmits its type with probability $p_{ij}$, and the potential parent of type $j$ 
transmits its type with probability $p_{ji}$, with $p_{ij}+p_{ji}=1$.
We thus get 
\begin{align} \label{mu_logi}
\mu_i( x)=&  x_i^2+2x_i\sum_{j \neq i} p_{ij}x_j -x_i
= x_i[x_i+2\sum_{j \neq i} (1-p_{ji})x_j -1] \nonumber \\
=&x_i[x_i+2(1-x_i)-2\sum_{j \neq i} p_{ji}x_j -1]= x_i[1-x_i-2\sum_{j \neq i} p_{ji}x_j ].
\end{align}
We thus obtain a competitive Lotka-Volterra function, whose competition coefficients are functions of probabilities $p_{ij}$. 
Notice that the intracompetition coefficient ($1$ here) is constrained by the fact that the frequency of type $i$ cannot exceed 
$1$, and equals $1$ when the population is monomorphic.

\subsection{Rock-Paper-Scissors}\label{sneeeeky}

RPS is a children's game where rock beats scissors, which beat paper,
which in turn beats rock. Such competitive interactions between morphs or species in nature can
lead to cyclical dynamics, and have been documented in various ecological systems 
\cite{buss1979competitive,taylor1990complex,sinervo1996rock,kerr2002local,
kirkup2004antibiotic,cameron2009parasite,nahum2011evolution}. Let us describe two examples of such cycles. 
The first one \cite{sinervo1996rock} is concerned with pattern
of sexual selection in some male lizards. Males are associated to their throat colours, which
have three morphs. Type 1 individuals (orange throat) are polygamous and very aggressive. They
control a large territory. Type 2 individuals (dark-blue throat) are monogamous. They control
a smaller territory. Finally type 3 individuals (prominent yellow stripes on the throat, similar
to receptive females) do not engage in female-guarding behaviour but roam around in search
of sneaky matings. As a consequence of these different strategies, the type 1 outcompetes the
type 2 (because males are more aggressive), which outcompetes the type 3 (as males of type 2
are able to control their small territory and guard their female), which in turn outcompetes the
type 1 (as males of type 1 are not very efficient in defending their territory, having to split their
efforts on several females). The second example \cite{kirkup2004antibiotic} is concerned with the interactions between
three strains of Escherichia coli bacteria. Type 1 individuals release toxic colicin and produce
an immunity protein. Type 2 individuals produce the immunity protein only. Type 3 individuals
produce neither toxin nor immunity. Then type 1 is defeated by type 2 (because of the cost of
toxic colicin production), which is defeated by type 3, (because of the cost of immunity protein
production), which in turn is defeated by type 1 (not protected against toxic colicin).

To model such interactions in our setting, we assume as in the previous example, that $K_v$ is supported on $\{1,2\}$, 
and we introduce the partial order $1<2$, $2<3$, $3<1$. When two potential parents with different types are sampled as potential 
parents, they compete and the 'strongest' transmits its type. We thus define the colouring rule $c^N_z(.) = \delta_{\max \{z_1,z_2\}}$, and obtain
as limit selective function, for $i \in E$ and $x \in \Delta_K$,
\begin{equation} \label{mu_lizard} \mu_i(x)=\kappa x_i (x_{mod_3(i-1)}-x_{mod_3(i+1)}), \end{equation}
where $mod_3(i)$ is the rest of the division of $i$ by $3$.\\

The next result tells that either there is fixation in the RPS selection case, 
or at least the product of allele frequencies approaches $0$ when time increases.

\begin{lem} \label{lem_sneeeeky}
Let us consider the process $(X(t), t \geq 0)$ with infinitesimal generator $\mathcal{A}$ defined in \eqref{inf_gene}
and $\mu$ given by \eqref{mu_lizard}, that is to say the RPS selection.
Then
$$
\E\left[\ln \left(X_1(t) X_2(t) X_3(t)\right)\right] \searrow -\infty, \quad t \to \infty
$$
if and only if $\sigma\neq 0$ or $\Lambda\neq 0$. 
\end{lem}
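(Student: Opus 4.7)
The plan is to exploit the cyclic symmetry of the Rock-Paper-Scissors drift by working with the auxiliary polynomial test function $g(x):=x_1 x_2 x_3$. Applying the generator $\mathcal{A}$ from \eqref{inf_gene} to $g$, the drift contribution $\sum_{i=1}^{3}\mu_i(x)\partial_i g(x)=\kappa g(x)\sum_{i=1}^{3}(x_{i-1}-x_{i+1})$ vanishes by telescoping in the cyclic indices. Since $g$ is multilinear its diagonal second derivatives vanish, and summing the six off-diagonal contributions $\sigma_{ij}(x)\partial_{ij}g(x)=-x_i x_j\cdot x_k=-g(x)$ shows that the Wright-Fisher piece contributes $-3\sigma g(x)$. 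For the jump part, an elementary expansion on the simplex $\|x\|=1$ gives
$$\sum_{i=1}^{3} x_i\,g\bigl((1-z)x+z\mathbf{e}_i\bigr)-g(x)=g(x)\bigl[(1-z)^2(1+2z)-1\bigr]=g(x)\,z^2(2z-3),$$
so dividing by $z^2$ and integrating against $\Lambda$ yields a contribution $g(x)\int_0^1(2z-3)\,\Lambda(dz)$. Altogether $\mathcal{A}g=Cg$ with
$$C:=-3\sigma+\int_0^1(2z-3)\,\Lambda(dz),$$
and because $2z-3<0$ on $[0,1]$ we have $C\leq 0$, with $C<0$ if and only if $\sigma\neq 0$ or $\Lambda\neq 0$.

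Because $g$ is bounded and smooth on $\Delta_K$, Dynkin's formula gives $u(t):=\E_x[g(X(t))]=g(x)e^{Ct}$. Applying Jensen's inequality to the concave map $\ln$ conditionally on $X(s)$ together with the Markov property then yields, for all $0\leq s\leq t$,
$$\E\bigl[\ln g(X(t))\bigm|X(s)\bigr]\leq \ln\E\bigl[g(X(t))\bigm|X(s)\bigr]=\ln g(X(s))+C(t-s),$$
so after taking expectations $t\mapsto \E[\ln g(X(t))]$ is non-increasing whenever $C\leq 0$, and is dominated by $\ln g(X(0))+Ct$, which diverges to $-\infty$ as soon as $C<0$. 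This establishes the forward implication and simultaneously the required monotonicity $\searrow$.

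For the converse, when $\sigma=0$ and $\Lambda\equiv 0$ the SDE \eqref{SDE_strong} reduces to the deterministic cyclic ODE $\dot x_i=\kappa x_i(x_{i-1}-x_{i+1})$, and the same generator computation (now with $C=0$) shows that $g(X(t))$ is a conserved quantity; hence $\ln g(X(t))=\ln g(X(0))$ is almost surely constant and its expectation cannot tend to $-\infty$. The heart of the argument is thus the algebraic cancellation of the RPS drift on $g$ together with the identity $(1-z)^2(1+2z)-1=z^2(2z-3)$, which both makes the sign of $C$ transparent and guarantees integrability of the jump contribution near $z=0$. The remaining verifications (membership of $g$ in the domain of $\mathcal{A}$, applicability of Dynkin's formula, and the Jensen/Markov manipulations) are routine from the boundedness of $g$ on $\Delta_K$, and I do not expect any genuine obstacle beyond the algebraic book-keeping.
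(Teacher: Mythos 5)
Your proof is correct, but it takes a genuinely different route from the paper's. The paper uses $\ln(x_1x_2x_3)=\sum_i\ln x_i$ itself as the Lyapunov function: it applies $\mathcal{A}$ directly to the logarithm, observes that the RPS drift telescopes away, and shows that the diffusive and jump parts push $\mathcal{A}\bigl(\sum_i \ln x_i\bigr)$ below a strictly negative constant $-C$ uniformly on the simplex, after which integrating the generator identity gives the linear decay. You instead work with the bounded polynomial $g(x)=x_1x_2x_3$, establish the exact eigenfunction relation $\mathcal{A}g=Cg$ with $C=-3\sigma+\int_0^1(2z-3)\Lambda(dz)\le 0$ (your algebra for the drift cancellation, the $-3\sigma g$ diffusive term, and the identity $(1-z)^2(1+2z)-1=z^2(2z-3)$ all check out), and then transfer to the logarithm via conditional Jensen and the Markov property. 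What your route buys is cleanliness: $g$ is smooth and bounded on $\Delta_K$, so Dynkin's formula applies without any localization, and you avoid both the unboundedness of $\ln$ and the singular terms of the form $\sigma/x_i$ that appear when the generator is applied to the logarithm near the boundary (issues the paper passes over somewhat quickly); you also get an explicit decay rate $C$ and a transparent treatment of the converse via conservation of $g$ in the deterministic case. What the paper's route buys is directness: it controls $\E[\ln(X_1X_2X_3)]$ itself rather than an upper bound obtained through Jensen, and its computation generalizes more readily to drifts that do not leave $g$ invariant. The only points worth making explicit in your write-up are that $\ln g$ is bounded above on $\Delta_K$, so all the expectations involved are well defined in $[-\infty,\ln(1/27)]$ and the conditional Jensen inequality survives taking expectations even when one side equals $-\infty$, and that $g$ lies in the domain of $\mathcal{A}$ so that $u'(t)=Cu(t)$ is legitimate; neither presents any real difficulty.
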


\begin{rem}
 Notice that if $\sigma$ and $\kappa$ satisfy assumptions of Proposition \ref{pro_suff_cond_fix}, we know that one of the 
 three alleles gets fixed in finite time.
\end{rem}

\subsection{Food web}
The previous example can be generalized to any number of types and any partial order. In particular, it is well suited to represent food webs. 
A possibility could be that $i<j$ indicates that $i$ is eaten by $j$, and in case $i$ and $j$ do not eat each other $i=j$ would 
indicate that we take $p_{ij}=p_{ji}=1/2$ in the example of logistic competition.

\subsection{Negative frequency-dependent selection}

Negative frequency-dependent selection is a common form of selection in nature. It refers to the fact that it is more 
advantageous for an 
individual to be of a type in minority in the population.
In the case of the orchid Dactylorhiza sambucina for instance, there is a negative frequency-dependent selection on colours
due to behavioural responses of pollinators to lack of reward availability \cite{gigord2001negative}. 
In other populations, such a selection can be due to the fact that some predators concentrate on common varieties of prey and overlook rare 
ones \cite{allen1988frequency}.

To represent this form of selection, we sample $K_v$ potential parents as usually, and choose the real parent uniformly at random 
among the potential parents with the less frequent type. 
 If there are several 'less frequent types' we choose one of them uniformly at random.
Notice that to represent such a form of selection, it is necessary to sample more than $2$ potential parents with a positive probability.
Otherwise it would result in no selection ($\mu\equiv 0$).
For the sake of simplicity, to compute $\mu$, we will assume that $K_v$ is supported on $\{1,3\}$, but we could consider any distribution for $K_v$.
In this case, if there are $3$ potential parents, for a type $i$ parent to be chosen there are three possibilities. 
Either, all the potential parents are of type $i$, or only one parent is of type $i$ and the two other parents are
of type $j$ different 
from $i$, or the three parents are of different types, $i$, $j$ and $k$. Summing the probabilities of these 
three events, we get
\begin{align} \label{mu_negfreqdep}
\nonumber \mu_i(x)=& \left[x_i^3 + 3 x_i \left(\sum_{j \neq i} x_j^2 + \frac{1}{3} \sum_{j \neq k, j,k \neq i}x_jx_k\right)-x_i \right]\\
=& x_i
\left[x_i^2-1 + 3 \left( \frac{2}{3} \sum_{j \neq i} x_j^2 + \frac{1}{3}(1-x_i)^2\right) \right]
=2  x_i
\left[ \sum_{j \neq i} x_j^2-x_i(1-x_i) \right].
\end{align}

\subsection{Positive frequency-dependent selection}

Positive frequency-dependent selection refers to the fact that it is more advantageous for an 
individual to be of a type in majority in the population.
This is for instance the fact for warning signal in butterflies, indicating that one individual is poisonous for predators \cite{chouteau2016warning}.
The modelling of this type of selection is similar in spirit to the previous one. 
Among the potential parents, we choose uniformly a parent with the most frequent type. If there are several 'most frequent
types' we choose one of them uniformly at random.
For an easy computation, we make the same assumptions on $K_v$ as in the previous example. 
In this case, if there are $3$ potential parents, for a type $i$ parent to be chosen there are three possibilities. 
Either, all the potential parents are of type $i$, or two parents are of type $i$ and the other one is of type $j$ different 
from $i$, or the three parents are of different types, $i$, $j$ and $k$. Summing the probabilities of these 
three events, we get
\begin{align} \label{mu_posfreqdep}
\mu_i( x)=& x_i
\left[x_i^2-1 + 3 x_i (1-x_i) + \sum_{j \neq k, j,k \neq i}x_jx_k \right]=  x_i
\left[(2x_i-1)(1-x_i) + \sum_{j \neq k, j,k \neq i}x_jx_k \right].
\end{align}

The remainder of the paper is dedicated to the proofs.

\section{Proofs} \label{sectionproofs}

\subsection{Proofs of Section \ref{sectiongeneralresults}}

Before focusing on the convergence of the frequency process, we will first prove Proposition \ref{Generator}, which states that 
the infinitesimal generator $\mathcal{A}$ defined in \eqref{inf_gene} characterizes the solution to \eqref{SDE_strong} when this 
equation is well posed.

\begin{proof}[Proof of Proposition \ref{Generator}] 
We will show that $\mathcal{A}$ is the infinitesimal generator of the process $(X(t), t\geq 0)$.
The drift part (with the selection function $\mu$) is straightforward, and the last part is the classical generator of a $\Lambda$-Fleming Viot 
process (see \cite{griffiths2014lambda} for instance). Let us check that the diffusive part has the good form.
For $i<j \in E^2$, from \eqref{SDE_strong} and Itô's formula, we have:
\begin{align*}
 \frac{d \langle X_i,X_i \rangle}{dt} & = \sigma \sum_{k=1}^i \zeta_{ik}^2 (X)= \sigma\zeta_{ii}^2(X)+  \sigma\sum_{k=1}^{i-1} \zeta_{ik}^2 (X)\\
&  = \sigma \frac{X_i(1-X_1-...-X_i)}{1-X_1-...-X_{i-1}} + \sigma  X_i^2 \sum_{k=1}^{i-1}\frac{X_k}{(1-X_1...-X_{k-1})(1-X_1...-X_k)}\\
&  = \sigma \frac{X_i(1-X_1-...-X_i)}{1-X_1-...-X_{i-1}} + \sigma X_i^2 \sum_{k=1}^{i-1}\left(  \frac{1}{1-X_1...-X_{k}} - \frac{1}{1-X_1...-X_{k-1}}\right)\\
&  = \sigma \frac{X_i(1-X_1-...-X_i)}{1-X_1-...-X_{i-1}} + \sigma X_i^2 \left(  \frac{1}{1-X_1...-X_{i-1}} - 1\right)\\
&  = \sigma \frac{X_i}{1-X_1-...-X_{i-1}}\left( (1-X_1-...-X_i) +  X_i-X_i(1-X_1...-X_{i-1})\right)\\
&  = \sigma X_i(1-X_i),
\end{align*}
and
\begin{align*}
 \frac{d \langle X_i,X_j \rangle}{dt} & = \sigma \sum_{k=1}^i \zeta_{ik} (X) \zeta_{jk} (X)= 
\sigma \zeta_{ii}(X)\zeta_{ji}(X)+ \sigma \sum_{k=1}^{i-1} \zeta_{ik} (X) \zeta_{jk} (X)\\
&  =  -\sigma\sqrt{ \frac{X_i(1-X_1-...-X_i)}{1-X_1-...-X_{i-1}} }X_j \sqrt{ \frac{X_i}{(1-X_1...-X_{i-1})(1-X_1...-X_i)} } \\
& \qquad +  \sigma X_i X_j \sum_{k=1}^{i-1}\frac{X_k}{(1-X_1...-X_{k-1})(1-X_1...-X_k)}\\
&  =  -\sigma\frac{X_i X_j}{1-X_1-...-X_{i-1}} + \sigma  X_i X_j  \left(  \frac{1}{1-X_1...-X_{i-1}} - 1\right)  = - \sigma X_i X_j.
\end{align*}
\end{proof}

\begin{proof}[Proof of Theorem \ref{theo_strong_SDE}]
Under the assumption of existence and uniqueness of the solution of \eqref{SDE_strong}, 
it is enough to prove the $L_1$-convergence of the sequence of generators of $(X_{\lfloor M_t^N\rfloor}^N, t \geq 0)$,
where $M^N$ is a Poisson process with rate $\kappa / \rho_N$, to the generator of $(X(t), t \geq 0)$, evaluated in the $C_2$ functions $[0,1]\mapsto\R$ (see Lemma 19.27 of \cite{kallenberg2006foundations}). 
As $[0,1]$ is compact, it is enough to show pointwise convergence.
Then the claim will follow from Theorems 17.25 and 17.28 of \cite{kallenberg2006foundations}.
Recall the notations of Sections \ref{section_no_extr} and \ref{section_extr}, as well as \eqref{defpiN}-\eqref{defmuiN}.
In order to simplify the computations, we will adopt the following representation for the law of $\bar{X}^N$: 
$$ X_g^N \Big| \left\{ X_{g-1}^N=x \right\} = (1-H_g)\frac{M_x^{(1,N)}}{N} +H_g\frac{M_x^{(2,N)}}{N}, $$
where the laws of $M_x^{(1,N)}$ and $M_x^{(2,N)}$ are defined as follows:
$$ \P\left( M_x^{(1,N)} = y \right) = {N \choose y_1,...,y_K} \left( p_1^N(x) \right)^{y_1}... \left( p_K^N(x) \right)^{y_K},  $$
$$ M_x^{(2,N)}|\{ Z_g=z \} = \sum_{k=1}^N \left\{ (1-B_k)\mathbf{e}_{C_k}+ B_k \mathbf{e}_B \right\} ,$$
where $(y_i, i \leq K) \in \N^K$ such that $\sum_{1 \leq i \leq K}y_i=N$, $(B_k, 1\leq k \leq N)$ are Bernoulli random variables with parameter $z$, for $i \in E$, $1\leq k \leq N$, 
$\P(B=i)=\P(C_k=i)= x_i $, all these variables are independent and we recall that $(\mathbf{e}_i, 1\leq i \leq K)$ is the canonical 
basis of $E$.
Then the discrete generator $\mathcal{A}^N$ of $(X_{\lfloor M_t^N\rfloor}^N)$
applied to any function $f \in C^2 (\Delta_K)$ in a point $x$ of $\Delta_K$ satisfies:
\begin{align*}
 \mathcal{A}^N f(x):&= \lim_{t \to 0} \frac{\E \left[ f\left(X_{\lfloor M_t^N\rfloor}^N\right) \right]-f(x)}{t}\\
 & = \Lambda_N^\alpha([0,1]) \rho_N \kappa^{-1}\frac{\E \left[ f\left(M_x^{(2,N)}/N\right) \right]-f(x)}{\rho_N \kappa^{-1}}\\
 & \quad + \left(1- \Lambda_N^\alpha([0,1]) 
 \rho_N \kappa^{-1} \right) \frac{\E \left[ f\left(M_x^{(1,N)}/N\right) \right]-f(x)}{\rho_N \kappa^{-1}}.\end{align*}
Using the Taylor expansion and the representation of $M_x^{(2,N)}$ gives:
 \begin{align*}
 \mathcal{A}^N f(x) & = \Lambda_N^\alpha([0,1])\int_0^1 
 \E \left[ f\left(\sum_{k=1}^N \left\{ (1-B_k)\mathbf{e}_{C_k}+ B_k \mathbf{e}_B \right\}/N\right) -f(x)\Big|Z=z\right]
 \hat{\Lambda}_N^\alpha(dz)\\
 & \quad + \frac{1- \Lambda_N^\alpha([0,1]) \rho_N \kappa^{-1}}{\rho_N \kappa^{-1}}
 \sum_{i=1}^K \E \left[\left(M_x^{(1,N)}\right)_i/N-x\right] \frac{\partial f}{\partial i}(x)\\
 & \quad + \frac{1- \Lambda_N^\alpha([0,1]) \rho_N \kappa^{-1}}{2\rho_N \kappa^{-1}}
 \sum_{i,j=1}^K \E \left[\left(\left(M_x^{(1,N)}\right)_i/N-x\right)\left(\left(M_x^{(1,N)}\right)_j/N-x\right)\right]
 \frac{\partial^2 f}{\partial ij}(x)+o(1)\\
 &=: A_N+B_N+C_N+ o(1).
\end{align*}

First, from assumption (i) of Proposition \ref{propconv}, we get that
$$ \lim_{N \to \infty} B_N = \kappa \sum_{i=1}^K \mu_i(x) \frac{\partial f}{\partial i}f(x).$$

Second, as $1- Q_N(\{1\}) = o(1)$ when $N$ goes to $+\infty$ it is enough to study $C_N$ to consider the case when individuals sample 
only one potential parent. We get, using assumption (iii) in Proposition \ref{propconv},
$$ \lim_{N \to \infty} C_N = \frac{\sigma}{2} \sum_{i,j=1}^K x_i\left( \mathbf{1}_{i=j}-x_j \right) \frac{\partial^2 f}{\partial ij}f(x).$$

The limit of $A_N$ is more involved to obtain. 
First notice that $M_x^{(2,N)}$ can be rewritten as
$$ M_x^{(2,N)}|\{ Z=z \} = \sum_{k=1}^N \left\{ (1-z)\mathbf{e}_{C_k}+ z \mathbf{e}_B  \right\}
+ \sum_{k=1}^N (z-B_k)(\mathbf{e}_{C_k}-\mathbf{e}_{B}),$$
and that the expectation of the $i$th coordinate ($i \in E$) of the second sum is
$$ \E \left[ \sum_{k=1}^N  (z-B_k)(\mathbf{1}_{C_k=i}-\mathbf{1}_{B=i}) \right]
= \sum_{k=1}^N  \E[z-B_k](x_i-x_i) =0. $$
Hence
\begin{align*}
\E\left[ f\left( M_x^{(2,N)}/N\right)| Z=z \right]& =
\E\left[ f\left(\sum_{k=1}^N \left\{ (1-z)\mathbf{e}_{C_k}+ z \mathbf{e}_B  \right\}/N\right)\right]\\
 &\qquad + \frac{1}{2N^2} \sum_{i,j=1}^K 
\E\left[H_i^NH_j^N
 \frac{\partial^2 f}{\partial ij}(\Xi^N)\right] ,
\end{align*}
where $\Xi^N(z)$ belongs to the segment $[ \sum_{k=1}^N \left\{ (1-z)\mathbf{e}_{C_k}+ z \mathbf{e}_B\right\}/N, M_x^{(2,N)}/N]$ and
$$ H_i^N(z):=\sum_{k=1}^N (z-B_k)(\mathbf{1}_{C_k=i}-\mathbf{1}_{B=i}). $$

But by the Cauchy-Schwarz inequality, we have
\begin{align*} \left| \E\left[H_i^N(z)H_j^N(z) \frac{\partial^2 f}{\partial ij}(\Xi^N(z))\right] \right|
&\leq \sqrt{\E\left[ \left( H_i^N(z) \right)^2 \right]\E\left[ \left( H_j^N(z) \right)^2 \right]} 
\left\| \frac{\partial^2 f}{\partial ij} \right\|_\infty\\
& \leq C \sqrt{\E\left[ \left( H_i^N(z) \right)^2 \right]\E\left[ \left( H_j^N(z) \right)^2 \right]} , \end{align*}
for a finite $C$ where for a function $g$ on $\Delta_K$, $\|g\|_\infty:= \sup_{x \in \Delta_K}|g(x)|$, and we have used that a continuous 
function is bounded on a compact set.
We thus need to bound the term in the square root, which is a simple calculation, as we know explicitely the laws of 
$B$, $(B_k, 1\leq k \leq N)$ and $(C_k, 1 \leq k \leq N)$.
\begin{multline*}
 \E\left[ \left( H_i^N(z) \right)^2 \right] 
 = \E\left[ \left(\sum_{k=1}^N (z-B_k)(\mathbf{1}_{C_k=i}-\mathbf{1}_{B=i}) \right)^2 \right]=\\
  \E\left[ \sum_{k=1}^N (z-B_k)^2(\mathbf{1}_{C_k=i}-\mathbf{1}_{B=i})^2 \right]
  + \E\left[ \sum_{k\neq l=1}^N (z-B_k)(z-B_l)(\mathbf{1}_{C_k=i}-\mathbf{1}_{B=i})
  (\mathbf{1}_{C_l=i}-\mathbf{1}_{B=i}) \right].
\end{multline*}
As the $(B_k, 1\leq k \leq N)$ are mutually independent, with mean $z$, and independent of $B$ and $(C_k, 1 \leq k \leq N)$, 
the last term is null in the previous equality, which implies
$$ \left| \E\left[H_i^N(z)H_j^N(z) \frac{\partial^2 f}{\partial ij}(\Xi^N(z))\right] \right|
\leq C N. $$

In particular this implies that
\begin{align*} \frac{1}{N^2}\int_0^1 \left| \E\left[H_i^N(z)H_j^N(z) \frac{\partial^2 f}{\partial ij}f(\Xi^N(z))\right] \right| \Lambda_N^\alpha(dz)
&\leq \frac{C}{N}\Lambda_N^\alpha([0,1])\\
&\leq C N^{2\alpha -1}\Lambda([0,1]),\end{align*}
by definition of $\Lambda_N^\alpha$ in \eqref{defLamdahat}.
We thus have shown that for large $N$, 
$$ A_n= \int_0^1 
 \left[\E \left[ f\left(\sum_{k=1}^N \left\{ (1-z)\mathbf{e}_{C_k}+ z \mathbf{e}_B \right\}/N\right) \right]-f(x)\right]
 \Lambda_N^\alpha(dz)+o(1) .$$
The next step consists in proving that 
$$ \int_0^1 
 \left[\E \left[ f\left(\sum_{k=1}^N \left\{ (1-z)\mathbf{e}_{C_k}+ z \mathbf{e}_B \right\}/N\right) \right]-
 f\left(\sum_{k=1}^N \left\{ (1-z)x+ z \mathbf{e}_B \right\}/N\right) \right]
 \Lambda_N^\alpha(dz)=o(1). $$
The result is obtained as before by doing a Taylor expansion around $\sum_{1 \leq k\leq N} \left\{ (1-z)\mathbf{e}_{C_k}+ z \mathbf{e}_B \right\}/N$, 
and by introducing 
$$ H_i^N(z):=(1-z)\sum_{k=1}^N (\mathbf{1}_{C_k=i}-x_i). $$
As the calculations are very similar to the previous ones we do not provide details. 
This yields that for large $N$, 
$$ A_N= \int_0^1 
 \left(\E \left[ f\left( (1-z)x+ z \mathbf{e}_B \right) \right]-f(x)\right)
 \Lambda_N^\alpha(dz)+o(1). $$
 To end the proof we need to show that $\Lambda_N^\alpha(dz)$ can be replaced by $\Lambda(dz)/z^2$.
 For this step we will again make use of the Taylor expansion. Indeed we have
 \begin{multline*}
  \E \left[ f\left( (1-z)x+ z \mathbf{e}_B \right)-f(x) \right]\\=
  \sum_{i=1}^K z\E[\mathbf{1}_{B=i}-x_i] \frac{\partial f}{\partial i}(x)
  + \frac{1}{2} \sum_{i,j=1}^K z^2\E\left[(\mathbf{1}_{B=i}-x_i)(\mathbf{1}_{B=j}-x_j)\frac{\partial^2 f}{\partial ij}\left(\Phi^N(z)\right)\right] \\
  = \frac{1}{2} \sum_{i,j=1}^K z^2\E\left[(\mathbf{1}_{B=i}-x_i)(\mathbf{1}_{B=j}-x_j)\frac{\partial^2 f}{\partial ij}\left(\Phi^N(z)\right)\right] ,
 \end{multline*}
where $\Phi^N(z)$ takes values on the interval $[x,x+ z(\mathbf{e}_B-x)]$. Thus there exists a finite constant $C$ such that 
$$ \Big| \E \left[ f\left( (1-z)x+ z \mathbf{e}_B \right)-f(x) \right]\Big|\leq Cz^2. $$
In particular, 
$$
 \left|  \int_0^1 \left(\E \left[ f\left( (1-z)x+ z \mathbf{e}_B \right) \right]-f(x)\right)
 \left(\Lambda_N^\alpha(dz)- \frac{\Lambda(dz)}{z^2}\right) \right|\leq C \int_0^1 \mathbf{1}_{z \leq N^{-\alpha}}\Lambda(dz) 
 \underset{N \to \infty}{\to} 0.
$$
This ends the proof.
\end{proof}

\begin{proof}[Proof of Proposition \ref{propconv}] 
If we do not assume that Equation \eqref{SDE_strong} has a unique strong solution, the convergence of the generators is no longer sufficient to claim the weak 
convergence of the processes. However,  we can show that the sequence of processes is tight using the robust theory introduced in 
\cite{bansaye2018scaling}. 
More precisely, we will apply Theorem 2.3 in \cite{bansaye2018scaling}. To this aim, we need to check that hypotheses \textbf{(H0)} 
and \textbf{(H1)} in in \cite{bansaye2018scaling} are satisfied.

Hypothesis \textbf{(H0)} holds because we are working in a compact space. 
To prove \textbf{(H1)} it is enough to extend the functions and functional spaces in Section 
4 of \cite{bansaye2018scaling} to high dimensions. Following the notation of \cite{bansaye2018scaling} we introduce the function 
$$
h(u_1,u_2,...,u_K)=(1-e^{-u_1},...,1-e^{-u_K})
$$
and the functional space
$$
\mathcal{H}=\{(u_1,....,u_k)\in\Delta_K\rightarrow H_{\bar x}, \bar x\in \R^K\}\text{ with }H_{\bar x}(\bar u)=1-e^{-\sum_{i=1}^Kx_iu_i}.
$$
Then as in \cite{bansaye2018scaling}, \textbf{(H1.1)} holds by construction, \textbf{(H1.2)} follows an application of the Local Stone-Weierestrass Theorem 
(See Appendix 6.4 of 
\cite{bansaye2018scaling} for details). Hypothesis \textbf{(H1.3)} follows from the fact that $\mathcal{H}\subset C_2$ and thus we can apply the uniform convergence 
of the generators that we 
verified in the proof of Theorem \ref{theo_strong_SDE}.
\end{proof}

\begin{proof}[Proof of Corollary \ref{cor_LiPu}]
To prove that the SDE \eqref{SDE_strong} has a unique strong solution, under hypothesis $(1)$ we will apply Theorem 5.1 of \cite{li2012strong}, under  hypothesis $(2)$ we will 
apply Corollary 2.9 in \cite{xi2017jump}, and for hypothesis $(3)$ we will use Lemma 3.6 of \cite{casanova2018duality} and induction.
 
We will first work in the direction of proving the statement under hypothesis $(2)$, and $(1)$ will be obtained after an additional computation.
Consider any colouring rule, as defined in Definition \ref{def_col_rule}. 
Let $i \leq K $.
For any configuration of potential parents with $k_i$ parents of type $i$ ($1\leq i \leq K$), there is 
 a probability $p_i^N(k_1,...,k_{K})$ for the offspring to be of type $i$. Notice that we may have $p_i^N>0$ even if $k_i=0$ 
when we take mutations into account. Moreover, knowing that there are $k_1+...+k_{K}=k$ potential parents, such a configuration 
has a probability 
$$  {k \choose k_1,...,k_{K}}x_1^{k_1}...x_{K}^{k_{K}}$$ 
to be picked. Hence,
$$ \mu_i(x)= \lim_{N \to \infty} \frac{1}{\rho_N} \left( \sum_{k=2}^{\infty} \pi_k^N \left( \sum_{k_1+...+k_{K}=k} {k \choose k_1,...,k_{K}}
 x_1^{k_1}...x_{K}^{k_{K}}p_i^N(k_1,...,k_{K})-x_i\right)\right).  $$
But notice that for any $K$-tuple $(k_1,...,k_K)$ of integers, and $(z,x) \in \Delta_K^2$,
\begin{align*}
 \left| x_1^{k_1}...x_{K}^{k_{K}}-z_1^{k_1}...z_{K}^{k_{K}} \right|
& \leq  \left| x_1^{k_1}-z_1^{k_1}\right|
 \left| x_2^{k_2}...x_{K}^{k_{K}} \right|+z_1^{k_1}
 \left| x_2^{k_2}...x_{K}^{k_{K}}-z_2^{k_2}...z_{K}^{k_{K}} \right|\\
&\leq |x_1-z_1|+ \left| x_2^{k_2}...x_{K}^{k_{K}}-z_2^{k_2}...z_{K}^{k_{K}} \right|
\leq ... \leq \sum_{j=1}^{K}|x_j-z_j|.
\end{align*}
Hence,
\begin{align*}
   &   \left| \mu_i(x)-\mu_i(z) \right|  \leq \\
 &  \lim_{N \to \infty} \frac{1}{\rho_N}
       \left[ \sum_{k=2}^{\infty} \pi_k^N  \left( \left|x_i-z_i\right|+ \sum_{k_1+...+k_{K}=k} {k \choose k_1,...,k_{K}}
p_i^N(k_1,...,k_{K}) \left| x_1^{k_1}...x_{K}^{k_{K}}- z_1^{k_1}...z_{K}^{k_{K}} \right|\right)\right]\\
& = \left|x_i-z_i\right|  +\lim_{N \to \infty} \frac{1}{\rho_N}\left[\sum_{k=2}^{\infty} \pi_k^N  \left( \sum_{k_1+...+k_{K}=k} {k \choose k_1,...,k_{K}}
p_i^N(k_1,...,k_{K}) \left| x_1^{k_1}...x_{K}^{k_{K}}- z_1^{k_1}...z_{K}^{k_{K}} \right|\right)\right]\\
& \qquad \qquad \qquad \qquad \leq C_i \sum_{j=1}^{K}|x_j-z_j|,     \end{align*}
where $C_i$ is a finite constant.
As a consequence, 
\begin{align*}
\left| \langle \mu(x)-\mu(z),x-z\rangle \right| &\leq \sup_{1 \leq i \leq K}C_i \sum_{i,j=1}^{K} |x_i-z_i||x_j-z_j|\\
&\leq  \sup_{1 \leq i \leq K}C_i \sum_{i,j=1}^{K} \left(|x_i-z_i|^2+|x_j-z_j|^2 \right)
\leq  \sup_{1 \leq i \leq K}2C_i K |x-z|^2.
\end{align*}

Now, take $\zeta \equiv 1 $ in Assumption 2.1 and $\rho=C$ in Assumption 2.3 of \cite{xi2017jump}.
We have to check that the following inequalities hold for $(x,z) \in \Delta_K^2$:
 \begin{equation} \label{Ass2.1}
  \langle \mu(x),x\rangle + |\zeta(x)|^2 + \int_0^1 \int_0^1 |c(x,u,y)|^2 du \frac{\Lambda(dy)}{y^2}\leq C (|x|^2+1) ,
 \end{equation}
\begin{equation}\label{Ass2.3.1}
  \langle \mu(x)-\mu(z),x-z\rangle + |\zeta(x)-\zeta(z)|^2 \leq C |x-z|^2 ,
 \end{equation}
 and
\begin{equation}\label{Ass2.3.2}
   \int_0^1 \int_0^1 |c(x,u,y)-c(z,u,y)| du \frac{\Lambda(dy)}{y^2}\leq C |x-z|  ,
 \end{equation} 
where $c_i(x,u,y)= y(\mathbf{1}_{0 \leq u-(x_1+...+x_{i-1})<x_i}-x_i)$ and $C$ is a positive constant, in order to apply 
Corollary 2.9 in \cite{xi2017jump}.
The function $\sigma$ and $\mu$ are bounded. Moreover, 
$$ \int_0^1 \int_0^1 |c(x,u,y)|^2 du \frac{\Lambda(dy)}{y^2}\leq \int_0^1 \int_0^1 du \Lambda(dy)=\int_0^1 \Lambda(dy), $$
which is finite by assumption. Hence \eqref{Ass2.1} holds.
Let us now prove that \eqref{Ass2.3.2} holds. We have
\begin{multline*}
  \int_0^1 \int_0^1 |c_i(x,u,y)-c_i(z,u,y)| du \frac{\Lambda(dy)}{y^2} \\
  =  \int_0^1 \int_0^1 
  \left|(\mathbf{1}_{0 \leq u-(x_1+...+x_{i-1})<x_i}-x_i)-(\mathbf{1}_{0 \leq u-(z_1+...+z_{i-1})<z_i}-z_i)\right| du \frac{\Lambda(dy)}{y}\\
  \leq \left( 2|x_i-z_i|+|x_{i-1}-z_{i-1}| \right)  \int_0^1 \frac{\Lambda(dy)}{y},
\end{multline*}
with the convention $x_{-1}=z_{-1}=0$. Hence, \eqref{Ass2.3.2} holds with $C = 3\int_{(0,1]}\Lambda(dy)/y$. This proves that 
\eqref{SDE_strong} admits a unique strong solution under assumption $(2)$.

To obtain a general result in multiple dimensions we would need to verify that assumption \eqref{Ass2.3.1} holds. Let us focus on $\zeta$ and see what 
the problem is. We have for $i \in E$
\begin{multline*} 
\left|\zeta_{ii}(x) - \zeta_{ii}(z) \right|^2 = 
\left|\sqrt{ \frac{x_i(1-x_1...-x_i)}{1-x_1...-x_{i-1}} } -\sqrt{ \frac{z_i(1-z_1...-z_i)}{1-z_1...-z_{i-1}} } \right|^2\\
\leq \left| \frac{x_i(1-x_1...-x_i)}{1-x_1...-x_{i-1}}-\frac{z_i(1-z_1...-z_i)}{1-z_1...-z_{i-1}}  \right|<|x-z|,
\end{multline*}
where we have used that for $a,b \geq 0$, $|a-b|^2 \leq |a-b||a+b|= |a^2-b^2|$.
For $j > i \in E^2$,
$$ \left|\zeta_{ij}(x) - \zeta_{ij}(z) \right|^2 =0, $$
and for $j < i \in E^2$,
\begin{multline*} 
\left|\zeta_{ij}(x) - \zeta_{ij}(z) \right|^2 = 
\left|x_i \sqrt{ \frac{x_j}{(1-x_1...-x_{j-1})(1-x_1...-x_j)} } -
z_i \sqrt{ \frac{z_j}{(1-z_1...-z_{j-1})(1-z_1...-z_j)} } \right|^2\\
\leq \left| \frac{x_i^2 x_j}{(1-x_1...-x_{j-1})(1-x_1...-x_j)}  -
  \frac{z_i^2z_j}{(1-z_1...-z_{j-1})(1-z_1...-z_j)} \right|<3|x-z|.
\end{multline*}
Unfortunately, it is not enough to apply Corollary 2.9 in \cite{xi2017jump}; we would need $|x-y|^2$. \\

However, it is exactly what one needs to apply Theorem 5.1 of \cite{li2012strong}, which works only in the case $K=2.$ 
To be more precise, to check that the result is true under condition (1), we need to check conditions (2.a), (2.b), (5.a), (5.b) and (5.c) in \cite{li2012strong}.
The calculations are either already done in the previous lines, or very similar. 
We thus do not give the details. Notice that similarly to the case of \cite{casanova2018duality}, the assumption $\int_0^\infty \Lambda(dy)/y$ is not needed 
in this case. The finitness of $\Lambda([0,1])$ is enough.
We conclude that the result follows also under hypothesis $(1)$.\\

To prove statement $(3)$ we will make a change of variable and apply Lemma 3.6 in \cite{casanova2018duality}. Let us choose $i_0 \in [K-1] $ and consider the process 
$$ (Y_{i_0}(t), t \geq 0):= ((X_1+...+X_{i_0})(t), t\geq 0) .$$ 
Adding the $i_0$ equations,
and recalling that \eqref{shape_mui} implies that for $x \in \Delta_K$
$$  \sum_{i=1}^{i_0} \mu_i(x)= \sum_{k=1}^\infty \pi_k \left( \left(x_1+...+x_{i_0}\right)^{k+1}-
\left(x_1+...+x_{i_0}\right)\right), $$
$Y_{i_0}$ should be solution to 
\begin{align*} 
 dY_{i_0}(t)= & \kappa \sum_{k=1}^\infty \pi_k \left( Y_{i_0}^{k+1}(t)-
Y_{i_0}(t)\right)dt + \sqrt{\sigma} \sum_{i=1}^{i_0} \sum_{j=1}^{K}\zeta_{ij}(X(t))dB^{(j)}_t \nonumber \\
 &+ \sum_{i=1}^{i_0} \int_0^1 \int_0^1 z 
 \left( \mathbf{1}_{\left\{  \sum_{j=1}^{i-1}X_j(t-) \leq u < \sum_{j=1}^{i}X_j(t-) \right\}}-X_i(t-) \right) 
 \tilde{N}(dt,du,dz).
\end{align*}
Notice first that the jump term may be reduced to 
$$  \int_0^1 \int_0^1 z 
 \left( \mathbf{1}_{\left\{  u < Y_{i_0}(t-) \right\}}-Y_{i_0}(t-) \right) 
 \tilde{N}(dt,du,dz). $$
Let us now focus on the diffusion term. By definition of $Y_{i_0}$, we get using the calculations derived in the proof of 
Proposition \ref{Generator},
\begin{align*}
 \frac{ d \langle Y_{i_0},Y_{i_0} \rangle}{dt} & =  \frac{d \langle X_1+...+ X_{i_0},X_1+...+X_{i_0} \rangle}{dt} \\
  & = \sum_{i=1}^{i_0}  d \langle X_i,X_i \rangle + \sum_{1 \leq i,j \leq i_0, i \neq j}  d \langle X_i,X_j \rangle 
   =  \sigma\sum_{i=1}^{i_0} X_i(1-X_i) - \sigma\sum_{1 \leq i,j \leq i_0, i \neq j} X_iX_j\\
  & =  \sigma\sum_{i=1}^{i_0} X_i\left(1-X_i - \sum_{1 \leq j \leq i_0, i \neq j} X_j \right)
   = \sigma(X_1+...+X_{i_0})(1-X_1-...-X_{i_0}) \\
  & = \sigma Y_{i_0}\left(1-Y_{i_0}\right) .
\end{align*}
In particular it implies that there exists a Brownian motion $W^{(i_0)}$ such that (see Theorem (4.4) in \cite{durrett2018stochastic} for instance):
$$\sqrt{\sigma} \sum_{i=1}^{i_0} \sum_{j=1}^{K}\zeta_{ij}(X(t))dB^{(j)}_t = \sqrt{ \sigma Y_{i_0}(t)\left( 1 - Y_{i_0}(t) \right)}dW^{(i_0)}_t, $$
and thus $Y_{i_0}$ should be solution to
\begin{align*} 
 dY_{i_0}(t)= & \kappa \sum_{k=1}^\infty \pi_k \left( Y_{i_0}^{k+1}(t)-
Y_{i_0}(t)\right)dt +\sqrt{ \sigma Y_{i_0}(t)\left( 1 - Y_{i_0}(t) \right)}dW^{(i_0)}_t \nonumber \\
 &+  \int_0^1 \int_0^1 z 
 \left( \mathbf{1}_{\left\{  u < Y_{i_0}({t^-}) \right\}}-Y_{i_0}({t^-}) \right) 
 \tilde{N}(dt,du,dz).
\end{align*}
But from Lemma 3.6 in \cite{casanova2018duality} we know that this equation has a unique strong solution. We can make the same reasoning to prove that 
$Y_{i_0-1}$ (if $i_0>1$) is the unique strong solution to
\begin{align*} 
 dY_{i_0-1}(t)= & \kappa \sum_{k=1}^\infty \pi_k \left( Y_{i_0-1}^{k+1}(t)-
Y_{i_0-1}(t)\right)dt +\sqrt{ \sigma Y_{i_0-1}(t)\left( 1 - Y_{i_0-1}(t) \right)}dW^{(i_0-1)}_t \nonumber \\
 &+  \int_0^1 \int_0^1 z 
 \left( \mathbf{1}_{\left\{  u < Y_{i_0-1}(t-) \right\}}-Y_{i_0-1}(t-) \right) 
 \tilde{N}(dt,du,dz),
\end{align*}
where $W^{(i_0-1)}$ is uniquely determined from the Brownian motions $(B^{(j)}, 1 \leq j \leq K)$. As $X_{i_0}= Y_{i_0}-Y_{i_0-1}$, this concludes the proof.
\end{proof}

\begin{proof}[Proof of Lemma \ref{unicity_before_Teps}]
Assume that the hypotheses. of Lemma \ref{unicity_before_Teps} hold.
In the proof of Corollary \ref{cor_LiPu} we saw that the reason for which we could not ensure 
the existence of a strong solution was that all the 
$\zeta_{ij}$'s had not bounded continuous partial derivatives on $\Delta_K$. But for any $0 < \eps < 1/2$,
there exists a finite constant 
$C(\eps)$ such that for $x$ in $(\eps,1-\eps)^K$ and $(i,j,k) \in E^2$,
$$ \left|\frac{\partial \zeta_{ij}(x)}{\partial_k}\right| \leq C(\eps). $$
As a consequence, applying the Taylor Formula we get, for $x$, $z$ in $(\eps,1-\eps)^K$,
$$ \left|\zeta_{ij}(x) - \zeta_{ij}(z) \right|^2 \leq K C(\eps) |x-z|^2. $$
We thus can apply Corollary 2.9 in \cite{xi2017jump} and conclude.
\end{proof}

\subsection{Proofs of general results on the selective functions}

\begin{proof}[Proof of Lemma \ref{cor1}]
For each $N\in\N^*$ consider the random graph with parameters 
$$Q_N=(1-\rho_N)\delta_1+\rho_N\delta_N$$
and colouring rule $c^N_{z}=g(z/N)$ for every $N$-dimensional vector $z$ and $c^N_{z}=\delta_{z}$ for every one dimensional vector $z$. 
Note that this is indeed a colouring rule because we assumed that $g: \Delta_K\mapsto  \Delta_K.$ 
For every $i\in E$, note that 
$$p^N_i(x)=\sum_{z \in E^N} \P_{x}(z_v=z)c^N_{z}(i).$$ 
Observe that $z_v$ is a multinomial random variable with parameters $x$ and $1$ with probability $1-\rho_N$ and is a multinomial random variable with 
parameters $x$ and $N$ with probability $\rho_N$. Hence,
$$
p_i^N(x)=\E[c^N_{z_v}]=(1-\rho_N)x_i+\rho_N\E\left[g_i\left(\frac{z_v}{N}\right)\right] = x_i + \rho_N (g_i(x)-x_i)+ o(\rho_N), \quad (N \to \infty)
$$ 
where in the right hand side we used the Law of Large Numbers. The rest of the proof consists in applying 
Theorem \ref{theo_strong_SDE}.
\end{proof}

\begin{proof}[Proof of Corollary \ref{cor2}]
The proof is similar to the proof of Corollary \ref{cor1} and follows the observation that by construction
$$
\mu_i(x)=g_i(x)-x_i
$$
for all $i \in E$ and $x \in \Delta_K$.
\end{proof}

\subsection{Proofs of general results on alleles extinction and fixation}

\begin{proof}[Proof of Proposition \ref{pro_suff_cond_fix}]
Recall the definition of the ancestral process $(D_t, t \geq 0)$ in \eqref{inf_gen_count_proc}. 
Then if one of the conditions of Proposition \ref{pro_suff_cond_fix} is satisfied, 
the process $(D_t, t \geq 0)$ is positive recurrent (see Theorem 3 of \cite{griffiths2014lambda} and Theorem 1.1 of \cite{Foucart2013lambda}).
In particular, it will reach one in finite time almost surely. This means that all the individuals in the population at a given time 
will have the same ancestor. As we did not allow for mutations in this Proposition, it implies that all the descendants of this 
ancestor (that is to say all the individuals alive at subsequent generations and related to this ancestor by a sequence of edges) will 
have the same type.
\end{proof}
The proof of Proposition \ref{pro_succ_ext} is based on the following two lemmas:

\begin{lem}\label{lemmaZ}
 Assume that the process $Z=(Z_t, t \geq 0)$ satisfies an equation of the form:
\begin{equation}\label{EDSZ}
 dZ_t = (1-Z_t)S(Z_t,t)dt + \sqrt{Z_t(1-Z_t)}dB_t, \quad Z_0 \in (0,1)
\end{equation}
where $B$ is a Brownian motion, and for all $t \geq 0$ and $z \in [0,1]$, $|S(z,t)|\leq C <\infty$ and $S(0,t)=0$.
Then $Z_t \in [0,1]$ for all $t \geq 0$, and
$$ \int_0^{T_1} \frac{1}{1-Z_s}ds = \infty \quad \text{a.s.}, $$
where for $a \in [0,1]$, $ T_a:= \inf \{ t \geq 0, Z_t=a \}. $
\end{lem}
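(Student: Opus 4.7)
The plan is to establish the two claims separately. For the confinement $Z_t\in[0,1]$, I would appeal to one-dimensional diffusion theory: the coefficients of \eqref{EDSZ} are locally Lipschitz in the interior of $(0,1)$ and the diffusion coefficient $\sqrt{z(1-z)}$ is $\tfrac12$-H\"older, so Yamada--Watanabe yields pathwise uniqueness, while existence follows from a standard localisation on intervals $[\eps,1-\eps]$ and passage to the limit. The boundaries $0$ and $1$ are both absorbing: the drift vanishes at $z=1$ because of the factor $(1-Z_t)$, and at $z=0$ by the assumption $S(0,t)=0$; the diffusion coefficient also vanishes at both endpoints. A comparison argument against the solutions driven by the bounding drifts $\pm C(1-Z_t)$, which themselves stay in $[0,1]$, then forces $Z_t\in[0,1]$ for all $t\ge 0$.

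For the divergence of the time integral, I would distinguish the events $\{T_1=\infty\}$ and $\{T_1<\infty\}$. On $\{T_1=\infty\}$ one has $(1-Z_s)^{-1}\ge 1$ for every $s\ge 0$, so the integral is immediately infinite. On $\{T_1<\infty\}$ I would argue by contradiction, supposing that on some event $A\subset\{T_1<\infty\}$ of positive probability the quantity
$$ I:=\int_0^{T_1}\frac{ds}{1-Z_s} $$
is finite. Applying It\^o's formula to $\log(1-Z_t)$, valid for $t<T_1$, yields
$$ \log(1-Z_t)-\log(1-Z_0) = -\int_0^t\sqrt{\frac{Z_s}{1-Z_s}}\,dB_s - \int_0^t \frac{Z_s}{2(1-Z_s)}\,ds - \int_0^t S(Z_s,s)\,ds. $$
On the event $A$, each of the three terms on the right admits a finite limit as $t\uparrow T_1$: the first drift is bounded in absolute value by $I/2$ (since $Z_s\le 1$); the second by $CT_1<\infty$ (since $|S|\le C$ and $T_1$ is finite on $A$); and the stochastic integral is a continuous local martingale whose quadratic variation $\int_0^{T_1}Z_s(1-Z_s)^{-1}ds\le I$ is finite on $A$, so by Dambis--Dubins--Schwarz it converges almost surely to a finite limit. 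Consequently $\log(1-Z_t)$ would admit a finite limit at $T_1$, contradicting $Z_{T_1}=1$ (which forces $\log(1-Z_t)\to-\infty$).

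The main obstacle, of modest scope, is to justify the first part rigorously: both coefficients degenerate at the boundaries $z=0$ and $z=1$, so one must carefully ensure that the constructed solution is confined to $[0,1]$ without exploding from or crossing the endpoints. Once confinement and well-posedness of the SDE are in place, the It\^o-plus-contradiction argument giving the divergence of $\int_0^{T_1}(1-Z_s)^{-1}ds$ is essentially mechanical.
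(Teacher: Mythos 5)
Your proof is correct and follows essentially the same route as the paper: It\^o's formula applied to $\log(1-Z_t)$ (the paper uses $-\log(1-x)$ and stops at $T_{1-\eps}$), followed by a contradiction on the event $A\subset\{T_1<\infty\}$ where $\int_0^{T_1}(1-Z_s)^{-1}\,ds<\infty$, the absorbing-boundary observation handling confinement to $[0,1]$ and the trivial bound $(1-Z_s)^{-1}\geq 1$ handling $\{T_1=\infty\}$. The only difference is the final step: you invoke the standard a.s.\ convergence of a continuous local martingale on the event where its quadratic variation is finite, whereas the paper reaches the same conclusion by hand through a Dambis--Dubins--Schwarz time change and an expectation bound on $e^{-M_t}$; your version is cleaner but not a genuinely different argument.
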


\begin{lem}\label{leminduction}
Let $n \geq 3 $ be in $\N^*$ and $V=(V_1(t),...,V_{n-1}(t), t \geq 0)$ be a process with (possibly inhomogeneous) infinitesimal generator acting 
on $f$ at $v \in \Delta_{n-1} $ of the form
\begin{equation*}  \mathcal{A}_tf(v)= \sum_{i=1}^{n-1} \mu_i(v,t) \frac{\partial f}{\partial v_i}(v)+ 
\sigma \sum_{i,j=1}^{n-1} \sigma_{ij}(v) \frac{\partial^2 f}{\partial v_i v_j}(v), \end{equation*}
where $\mu$ is continuous with respect to $v$ and can be written
$$\mu(v,t)=(v_i(1-v_i)s_i(v,t),1 \leq i \leq n),$$ 
with $\|s\|_\infty\leq C$ for a finite $C$, and 
$$\sigma(v)=\left((1_{\{j=i\}}-v_j)v_i,1 \leq i,j \leq n\right).$$
 Let for all $t \geq 0$, 
 $$1-V_n(t)= V_1(t)+...+V_{n-1}(t),$$ 
 and define the time change $\tau$ on $[0,\infty)$ by 
 $$ \int_0^{\tau(t)}\frac{1}{1-V_n(s)}ds=t, \quad \forall \ t \geq 0. $$
 Next let us introduce the process
\begin{align*} Y=&\left(Y_1(t),...,Y_{n-2}(t), 1- Y_1(t)-...-Y_{n-1}(t),t \geq 0\right) 
\\:=& \left( \frac{V_1}{1-V_n}(\tau(t)),..., \frac{V_{n-2}}{1-V_n}
 (\tau(t)), \frac{V_{n-1}}{1-V_n}(\tau(t)) ,t \geq 0\right) .\end{align*}
 Then the stochastic process $Y$ has a (possibly inhomogeneous) infinitesimal generator acting 
on $f$ at $y \in \Delta_{n-1} $ of the form
\begin{equation*}  \mathcal{\tilde{A}}_tf(y)= \sum_{i=1}^n \tilde{\mu}_i(y,t) \frac{\partial f}{\partial y_i}(y) + 
\sigma \sum_{i,j=1}^n \sigma_{ij}(y) \frac{\partial^2 f}{\partial y_i y_j}(y), \end{equation*}
where $\tilde{\mu}$ is continuous with respect to $v$ and can be written
$$\tilde{\mu}(y,t)=(y_i(1-y_i)\tilde{s}_i(y,t),1 \leq i \leq n-1),$$ 
with $\|\tilde{s}\|_\infty\leq C'$ for a finite $C'$, and 
$$\sigma(y)=((1_{\{j=i\}}-y_j)y_i,1 \leq i,j \leq n-1).$$
 \end{lem}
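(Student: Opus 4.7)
The plan is to apply It\^o's formula to the coordinate functions $\phi_i(v):=v_i/(1-v_n)$ for $i=1,\dots,n-1$ and then combine the result with the time change $\tau$, which multiplies instantaneous drift and quadratic-covariation rates by $\tau'(t)=1-V_n(\tau(t))$. A key preliminary observation is that, because $V$ must remain on the simplex $\{\sum_j v_j=1\}$, its drift satisfies the invariance identity $\sum_{j=1}^n V_j(1-V_j)s_j(V,s)=0$; this constraint is what will eventually allow the drift of $Y$ to factor in the claimed shape.

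I would first handle the diffusion part. The non-trivial partial derivatives are $\partial_i\phi_i=1/(1-v_n)$ and $\partial_n\phi_i=v_i/(1-v_n)^2$, and a direct (but somewhat tedious) computation using $\sigma_{k\ell}(v)=v_k(\delta_{k\ell}-v_\ell)$ shows that
\[
\frac{d\langle \phi_i(V),\phi_j(V)\rangle}{dt}=\frac{2\sigma\,Y_i(\delta_{ij}-Y_j)}{1-V_n},
\]
where $Y_i:=\phi_i(V)$; one factor of $1-V_n$ drops out after collecting terms. Multiplying by the time-change factor $1-V_n$ gives covariation rate $2\sigma\,\sigma_{ij}(Y)$, which matches the diffusion part of $\tilde{\mathcal A}_t$. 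The same calculation shows that the second-order contributions from $\sigma_{in}$ and $\sigma_{nn}$ to the drift of $\phi_i(V)$ cancel exactly, so that the drift of $\phi_i(V)$ equals $\mu_i/(1-V_n)+\mu_n V_i/(1-V_n)^2 = V_i[(1-V_i)s_i+V_n s_n]/(1-V_n)$. After the time change the drift of $Y_i$ is therefore $V_i[(1-V_i)s_i+V_n s_n]$.

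The main obstacle is to rewrite this drift in the factored form $Y_i(1-Y_i)\tilde s_i$ with $\tilde s$ bounded. Using the invariance identity to substitute $V_n s_n=-(1-V_n)^{-1}\sum_{j\leq n-1}V_j(1-V_j)s_j$, and writing $V_j=(1-V_n)Y_j$, a short algebraic manipulation reorganizes the drift of $Y_i$ as
\[
(1-V_n)\,Y_i\sum_{\substack{j\neq i\\ j\leq n-1}}Y_j\bigl[(1-(1-V_n)Y_i)s_i-(1-(1-V_n)Y_j)s_j\bigr].
\]
Since $1-Y_i=\sum_{j\neq i,\,j\leq n-1}Y_j$ by construction, the sum is a $(1-Y_i)$-weighted average of bounded quantities; defining $\tilde s_i(y,t)$ as this weighted average (where $V_n(\tau(t))$ is absorbed into the time dependence, which explains the possible inhomogeneity even when the original $s$ is autonomous), the crude bound $|s|\leq C$ yields $|\tilde s_i|\leq 2C=:C'$. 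At $Y_i=1$ the weight $1-Y_i$ vanishes and so does the drift itself (the invariance forces $s_i+s_n=0$ on that face), so $\tilde s_i$ may be extended by any value there without affecting the identity. Combining the drift and diffusion parts gives the claimed form of $\tilde{\mathcal A}_t$.
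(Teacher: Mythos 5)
Your proposal is correct and follows essentially the same route as the paper: compute the drift and covariation of $V_i/(1-V_n)$ via It\^o/chain rule, note that the time change cancels the overall factor $1/(1-V_n)$, and then factor the resulting drift as $y_i(1-y_i)\tilde s_i$ with $\tilde s$ bounded. The only noteworthy difference is in the last algebraic step: where the paper bounds $\tilde\mu_i/\bigl(y_i(1-y_i)\bigr)$ by two separate representations (one adapted to $y_i\to 0$, one to $y_i\to 1$), you invoke the constraint $\sum_{j=1}^n\mu_j=0$ to eliminate $\mu_n$ and obtain the single antisymmetrized expression $(1-V_n)Y_i\sum_{j\neq i}Y_j\bigl[(1-V_i)s_i-(1-V_j)s_j\bigr]$, which exhibits both vanishing factors at once.
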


Before proving these two Lemmas, we prove Proposition \ref{pro_succ_ext}.

\begin{proof}[Proof of Proposition \ref{pro_succ_ext}]
 The fact that one of the alleles ultimately fixates is a consequence of Proposition \ref{pro_suff_cond_fix}.
If $n=2$, the result is immediate. Hence, we assume that $n \geq 3$.
From Lemma \ref{lemmaZ},
$$ \int_0^{T_1^{V_n}}\frac{1}{1-V_n(s)}ds=\infty, $$
where $T_1^{V_n}$ is the hitting time of $1$ by the process $V_n$.
Indeed, 
$$ \left| - \sum_{i=1}^{n-1} \mu_i(V(t),t) \right| \leq \sum_{i=1}^{n-1} V_i(t)\left(1-V_i(t)\right) |s_i(V(t),t)|
\leq C \sum_{i=1}^{n-1} V_i(t) = C \left( 1-V_n(t) \right).$$
 We thus may introduce the time change $\tau$ on $[0,\infty)$ such that 
 $$ \int_0^{\tau(t)}\frac{1}{1-V_n(s)}ds=t, \quad \forall \ t \geq 0. $$
As for any $t <\infty $, $V_n(\tau(t))<1$, we may consider the process
$$ Y(t)=(Y_1(t),...,Y_{n-2}(t)):= \left( \frac{V_1(\tau(t))}{1-V_n(\tau(t))},...,
\frac{V_{n-2}(\tau(t))}{1-V_n(\tau(t))} \right). $$
Thanks to Lemma \ref{leminduction}, we know that the stochastic process $(Y_1(t),...,Y_{n-2}(t))_{t \geq 0}$ 
has a (possibly inhomogenous) infinitesimal generator acting 
on $f$ at $y \in \Delta_{n-2} $ of the form
\begin{equation*}  \mathcal{\tilde{A}}_tf(y)= \sum_{i=1}^{n-2} \tilde{\mu}_i(y,t) \frac{\partial f}{\partial y_i}(y) + 
\sigma \sum_{i,j=1}^{n-2} \sigma_{ij}(y) \frac{\partial^2 f}{\partial y_i y_j}(y), \end{equation*}
where 
$$\tilde{\mu}(y,t)=(y_i(1-y_i)\tilde{s}_i(y,t),1 \leq i \leq n-2),$$ 
$|\tilde{s}|\leq C'$ for a finite $C'$, and 
$$\sigma(y)=((1_{\{j=i\}}-y_j)y_i,1 \leq i,j \leq n-2).$$
We end the proof following the proof of Theorem 2.1 in \cite{coron2017perpetual}.
By the induction hypothesis, the process $Y$ undergoes $n-2$ successive extinctions at times 
$$ E_1^{Y}<...<E_{n-2}^{Y}<\infty. $$
Hence, 
$$ \tau\left(E_1^{Y}\right)<...<\tau\left(E_{n-2}^{Y}\right)<\tau\left(\infty\right)=T_1^{V_n}. $$
if $\{ T_1^{V_n}< \infty \}$, the times $ \tau\left(E_1^{Y}\right)$, ..., $\tau\left(E_{n-2}^{Y}\right)$, $T_1^{V_n}$ correspond 
to the $n-1$ extinction times of alleles $\{1,2,...,n-1\}$. This concludes the proof, as from the first part of the theorem 
we know that 
$$\P(\bigcup_{i=1}^n \{T_1^{V_i}<\infty\})=1.$$
\end{proof}

To prove Lemma \ref{lemmaZ}, we cannot use general results of \cite{coron2017perpetual} because of the selection term.
We will instead apply Itô's formula to an auxiliary function of the process $Z$.

\begin{proof}[Proof of Lemma \ref{lemmaZ}]
First notice that $Z$ is continuous and that the states $0$ and $1$ are absorbing.
This implies that $Z$ stays in the interval $[0,1]$.
Let $\eps,Z_0>0$ be such that $Z_0<1-\eps$.
By applying Itô's Formula to the function $f(x):=- \ln (1-x)$ we get for any positive $t$:
\begin{multline*}
 - \ln ( 1- Z_{t \wedge T_{1-\eps}} )+ \ln (1-Z_0) \\ =\int_0^{t \wedge T_{1-\eps}} S(Z_s,s)ds + \int_0^{t \wedge T_{1-\eps}} \sqrt{\frac{Z_s}{1-Z_s}}dB_s
 + \frac{1}{2} \int_0^{t \wedge T_{1-\eps}}\frac{Z_s}{1-Z_s}ds.
\end{multline*}
We have
$$ \P\left( \lim_{\eps \to 0^+} \left\{- \ln ( 1- Z_{T_{1-\eps}} )+ \ln (1-Z_0)\right\}= \infty, T_1 < \infty \right)= 
\P(T_1 < \infty), $$
and 
$$ \P\left( \limsup_{\eps \to 0^+} \left|\int_0^{T_{1-\eps}} S(Z_s,s)ds\right|< \infty,
T_1 < \infty \right)= \P(T_1 < \infty), $$
as $S$ is bounded by assumption.
We deduce
\begin{equation} \label{infinite_sum} \P\left( \lim_{\eps \to 0^+} \left\{ 
\int_0^{T_{1-\eps}} \sqrt{\frac{Z_s}{1-Z_s}}dB_s + \int_0^{T_{1-\eps}}\frac{Z_s}{1-Z_s}ds
\right\}= \infty, T_1 < \infty \right)= \P(T_1 < \infty).\end{equation}
We will now prove that \eqref{infinite_sum} implies the following property:
\begin{equation} \label{infinite_term} \P\left( 
\int_0^{T_{1}}\frac{Z_s}{1-Z_s}ds
= \infty, T_1 < \infty \right)= \P(T_1 < \infty).\end{equation}
The random variable 
$$ \int_0^{T_{1-\eps}}\frac{Z_s}{1-Z_s}ds $$
is non negative and non increasing with $\eps$. As a consequence, it has a nonnegative limit when $\eps$ goes to $0$, which 
can be finite or infinite. 
Let us consider a measurable event $A$ such that 
\begin{equation} \label{var_quad_finite}  \lim_{\eps \to 0^+} \left\{ 
\int_0^{T_{1-\eps}}\frac{Z_s}{1-Z_s}ds
\right\}=
\int_0^{T_1}\frac{Z_s}{1-Z_s}ds< \infty \quad \text{a.s. on } A \cap \{T_1 < \infty\}. \end{equation}
Then from \eqref{infinite_sum}, we get that
\begin{equation} \label{mart_infinite}  \lim_{\eps \to 0^+} \left\{ 
\int_0^{T_{1-\eps}}\sqrt{\frac{Z_s}{1-Z_s}}dB_s
\right\}= \infty \quad \text{a.s. on } A \cap \{T_1 < \infty\}. \end{equation}

Let us introduce the process 
$$ M^{(\eps)}_t:=\int_0^{t \wedge T_{1-\eps}} \sqrt{\frac{Z_s}{1-Z_s}}dB_s. $$
$ M^{(\eps)}$ is a continuous martingale. In particular, it is a time change of a Brownian motion, and there exists a Brownian 
motion $W$ such that (see Theorem (4.4) in \cite{durrett2018stochastic} for instance):
$$ M^{(\eps)}_t = W_{\int_0^{t \wedge T_{1-\eps}}\frac{Z_s}{1-Z_s}ds}. $$
This implies:
\begin{align*}
 \E \left[ \mathbf{1}_{A \cap \{T_1 < \infty\}} e^{- \int_0^{t \wedge T_{1-\eps}} \sqrt{\frac{Z_s}{1-Z_s}}dB_s} \right]&=
 \E \left[ \mathbf{1}_{A \cap \{T_1 < \infty\}} e^{- W_{\int_0^{t \wedge T_{1-\eps}}\frac{Z_s}{1-Z_s}ds}} \right]\\
 & \geq \E \left[ \mathbf{1}_{A \cap \{T_1 < \infty\}} e^{- \sup \{ W_u,u \leq \int_0^{t \wedge T_{1-\eps}}\frac{Z_s}{1-Z_s}ds\}} \right].
\end{align*}
As $\{T_1 < \infty\}$ implies $\{T_{1-\eps} < \infty\}$, we may let $t$ go to infinity, and obtain
\begin{align} \label{before_lim}
\nonumber \E \left[ \mathbf{1}_{A \cap \{T_1 < \infty\}} e^{- \int_0^{T_{1-\eps}} \sqrt{\frac{Z_s}{1-Z_s}}dB_s} \right]
 & \geq \E \left[ \mathbf{1}_{A \cap \{T_1 < \infty\}} e^{- \sup \{ W_u,u \leq \int_0^{T_{1-\eps}}\frac{Z_s}{1-Z_s}ds\}} \right]\\
 & \geq \E \left[ \mathbf{1}_{A \cap \{T_1 < \infty\}} e^{- \sup \{ W_u,u \leq \int_0^{T_1}\frac{Z_s}{1-Z_s}ds\}} \right].
\end{align}
But from \eqref{mart_infinite}, we get that 
$$ \lim_{\eps \to 0}\E \left[ \mathbf{1}_{A \cap \{T_1 < \infty\}} e^{- \int_0^{T_{1-\eps}} \sqrt{\frac{Z_s}{1-Z_s}}dB_s} \right]=0, $$
and \eqref{var_quad_finite} implies that the right hand side of \eqref{before_lim} is positive if and only if 
the event $A \cap \{T_1 < \infty\}$ has a positive probability. We thus deduce that 
$$ \P \left( A \cap \{T_1 < \infty\}\right)=0, $$
which implies \eqref{infinite_term}.
We conclude the proof of Lemma \ref{lemmaZ} by noticing that
$$ \P\left( \int_0^{T_{1}}\frac{1}{1-Z_s}ds
= \infty, T_1 = \infty \right)= \P(T_1 = \infty).$$
\end{proof} 
 
 \begin{proof}[Proof of Lemma \ref{leminduction}]
Let us denote by $\tilde{\mathcal{L}}$ the infinitesimal generator of the process
  $$ \left( \frac{V_1}{1-V_n}(t),..., \frac{V_{n-2}}{1-V_n}(t), 1-V_n(t) \right)_{t \geq 0}. $$
For any real valued function $f$ defined on $\{ (y_1,...,y_{n-2},1-x_n) \in\Delta_{n-2}\times \Delta_1 \}$, 
twice differentiable,
we can write for $x_n \neq 1$,
$$ \tilde{\mathcal{L}}f (y_1,...,y_{n-2},1-x_n) = :\mathcal{L}(f \circ g)(x_1,...,x_{n-1}), $$
where by definition, for $(x_1,...,x_{n-1}) \in [0,1]^{n-1}$ such that $0<x_1+...+x_{n-1}\leq 1$,
$$ y=(y_1,...,y_{n-2},1-x_n) = g(x_1,...,x_{n-1})= \left( \frac{x_1}{x_1+...+x_{n-1}}, ..., \frac{x_{n-2}}{x_1+...+x_{n-1}}, x_1+...+x_{n-1} \right).  $$
It can be verified that
\begin{multline*}
 \mathcal{L}f(x)= \mathcal{L}f(x_1,...x_{n-1}):= \sum_{i=1}^{n-1}\mu_i(x)\frac{\partial f}{\partial x_i}(x)
 + \sum_{i=1}^{n-1}x_i(1-x_i)\frac{\partial^2 f}{\partial x_i^2}(x)
 - \sum_{i\neq j=1}^{n-1}x_ix_j\frac{\partial^2 f}{\partial x_i x_j}(x).
\end{multline*}
The calculations for the diffusion part of $\tilde{\mathcal{L}}$ have been done in \cite{coron2017perpetual}. 
We thus only need to compute the term of drift.
\begin{align*} A(x):=& \sum_{i=1}^{n-1} \mu_i(x) \frac{ \partial }{\partial x_i}(f\circ g (x))\\
= &  \sum_{i=1}^{n-2} \mu_i(x) \left(\sum_{j=1}^{n-2} \frac{ \partial y_j }{\partial x_i} \frac{\partial f}{\partial y_j} 
( y) 
+ \frac{ \partial (1-x_n) }{\partial x_i} \frac{\partial f}{\partial (1-x_n)} ( y)\right)\\
& \quad + \mu_{n-1}(x)\left(
\sum_{j=1}^{n-2} \frac{ \partial y_j }{\partial (1-x_n)} \frac{\partial f}{\partial y_j} 
( y) + \frac{\partial f}{\partial (1-x_n)} ( y)
\right)
\\
= &  \sum_{i=1}^{n-2} \mu_i(x) \left(\sum_{j=1, j \neq i}^{n-2} \frac{- x_j }{(1-x_n)^2} \frac{\partial f}{\partial y_j} 
(y) +  \frac{(1-x_n)-x_i}{(1-x_n)^2}  \frac{\partial f}{\partial y_i} (y)   + \frac{\partial f}{\partial (1-x_n)} 
( y)\right)\\
& \quad + \mu_{n-1}(x)\left(
\sum_{j=1}^{n-2} \frac{ -x_j }{(1-x_n)^2} \frac{\partial f}{\partial y_j} 
(y) + \frac{\partial f}{\partial (1-x_n)} (y)
\right).
\end{align*}
Rearranging the terms, we get:
$$ A(x) = \frac{1}{1-x_n}\left( \sum_{i=1}^{n-2}  \left( \mu_i(x) - y_i \sum_{j=1}^{n-1}\mu_j(x) \right) \frac{\partial f}{\partial y_i}(y)
\right)+ \sum_{j=1}^{n-1}\mu_j(x) \frac{\partial f}{\partial (1-x_n)} ( y). $$
If we introduce the notation 
$$ \tilde{\mu}_i(y,x)=  \mu_i(x) - y_i \sum_{j=1}^{n-1}\mu_j(x), $$
we get, adding the diffusive part:
\begin{align*}
 \tilde{\mathcal{L}}(y)=& \frac{1}{1-x_n}  \sum_{i=1}^{n-2} \left( \tilde{\mu}_i(y,x) \frac{\partial f}{\partial y_i}(y)
+ \sigma y_i (1- y_i)\frac{\partial^2 f}{\partial y_i^2}(y)- \sum_{j = 1, j \neq i}^{n-2}y_i  y_j\frac{\partial^2 f}{\partial y_i \partial y_j}(y)\right)\\
&+ \sum_{i=1}^{n-1}\mu_i(x) \frac{\partial f}{\partial (1-x_n)} ( y) + \sigma x_n(1-x_n) \frac{\partial^2 f}{\partial (1-x_n)^2}(y).
\end{align*}
This proves that the diffusive part of the process $Y$ is the same as the diffusive part of a $(n-2)$-dimensional 
Wright-Fisher process.
We still have to prove that $\tilde{\mu}$ satisfies the assumptions we want.
$\tilde{\mu}$ can be rewritten in two different ways. Firstly we have
\begin{align} \label{tildexi} \tilde{\mu}_i(y,x)
& = x_i(1-x_i) s_i(x) - y_i \sum_{j=1}^{n-1}\mu_j(x) = y_i \left( (1-x_n)(1-x_i)s_i(x)- \sum_{j=1}^{n-1}\mu_j(x) \right). \end{align}
Secondly,
\begin{align} \label{1tildexi} \tilde{\mu}_i(y,x)
& = (1-y_i) \mu_i(x) - y_i \sum_{j=1, j \neq i}^{n-1}\mu_j(x) 
 = (1-y_i) \mu_i(x) - y_i \sum_{j=1, j \neq i}^{n-1}x_j(1-x_j)s_j(x).\end{align}
Let us focus on the last term. By the assumption, we know that there exists a finite $C$ such 
that for any $x \in \Delta_{n-1}$, 
and for $1 \leq j \leq n-1$,
$$ \left|  y_i (1-x_j)s_j(x) \right| \leq C .$$
Hence, we get
\begin{align} \label{1tildexi2}
\left|y_i \sum_{j=1, j \neq i}^{n-1}x_j(1-x_j)s_j(x) \right| & \leq C
 \sum_{j=1, j \neq i}^{n-1}x_j = C(1- x_i-x_n) = C(1-x_n)(1-y_i).
\end{align}
Thus, from \eqref{tildexi}, we deduce that there exists a finite $C'$ such that
$$ \limsup_{y \in [0,1]^{n-1}, x \in \Delta_{n-1},  y_i \to 0^+} \frac{|\tilde{\mu}_i(y,x)|}{y_i(1-y_i)}<C', $$
and from \eqref{1tildexi} and \eqref{1tildexi2} we deduce that
$$ \limsup_{y \in [0,1]^{n-1}, x \in \Delta_{n-1},y_i \to 1^-} \frac{|\tilde{\mu}_i(y,x)|}{y_i(1-y_i)}<C'. $$
As we are working on compact sets, 
we obtain that for $1 \leq i \leq n-2$, $\tilde{\mu}_i(y,x)/(y_i(1-y_i))$ is continuous with respect to $y$.
This concludes the proof.
\end{proof}

\subsection{Proofs of results on specific examples}

\begin{proof}[Proof of Lemma \ref{lem_succ_ext}]
 From Remark \ref{rem_xi}, we know that it is enough to check that in the examples under consideration, $\mu_i(x)/(1-x_i)$ is bounded, for 
 $i \in E$ and $x \in \Delta_K$.\\

\noindent \textbf{Transitive ordering case}: Recall that according to \eqref{shape_mui},
$$ \mu_i(x) = \sum_{j=1}^\infty \pi_j \left((x_0+...+x_i)^{j+1}-(x_0+...+x_{i-1})^{j+1}-x_i\right).$$
We can rewrite $\mu_i$ as 
the sum of two functions $\alpha_i$ and $\beta_i$ as follows
 \begin{align*}
\mu_i(x) & =  \sum_{j=1}^\infty \pi_j 
\left(x_i\sum_{k=0}^j (x_0+...+x_i)^{k}(x_0+...+x_{i-1})^{j-k} -x_i\right) \\
& = x_i\sum_{j=1}^\infty \pi_j 
\left(\sum_{k=0}^j (x_0+...+x_i)^{k}(x_0+...+x_{i-1})^{j-k} -1\right) \\
& = \underbrace{ x_i \sum_{j=1}^\infty \pi_j 
((x_0+...+x_i)^{j}-1)}_{\alpha_i}\\
&+\underbrace{x_i (x_0+...+x_{i-1})\sum_{j=1}^\infty \pi_j\sum_{k=0}^{j-1} (x_0+...+x_i)^{k}(x_0+...+x_{i-1})^{j-k-1}}_{\beta_i}.\end{align*}
Firstly notice that
 \begin{align*} |\alpha_i|&=  x_i \sum_{j=1}^\infty \pi_j 
\left(1-(x_0+...+x_i)^{j}\right)\\& = 
  x_i (1-(x_0+...+x_i)) \sum_{j=1}^\infty \pi_j 
\left(\sum_{k=0}^{j-1}(x_0+...+x_i)^{k}\right)\\& \leq 
 x_i (1-x_i) \sum_{j=1}^\infty j \pi_j =  x_i (1-x_i) \beta,
\end{align*}
where we recall that $\beta$ has been defined in (v) of Proposition \ref{propconv}.
Secondly, we have,
 \begin{align*} |\beta_i|&=
  x_i (x_0+...+x_{i-1})\sum_{j=1}^\infty \pi_j 
\left(\sum_{k=0}^{j-1} (x_0+...+x_i)^{k}(x_0+...+x_{i-1})^{j-k-1}\right)\\&\leq
  x_i (1-x_i)\sum_{j=1}^\infty j\pi_j =  x_i (1-x_i) \beta.
 \end{align*}
As a consequence, 
$$ |\mu_i(x)|\leq 2 \kappa  \beta x_i(1-x_i). $$

\vspace{.4cm}

\noindent \textbf{RPS or food web case}: For $ i \in E$,
  $$ |\mu_i(x)|= x_i\left| \sum_{ j \neq i, j<i}x_j-\sum_{ j \neq i, i<j}x_j  \right|
  \leq  x_i\left( \sum_{ j \neq i, j<i}x_j+\sum_{ j \neq i, i<j}x_j  \right) \leq 2 x_i (1-x_i). $$

\vspace{.4cm}

\noindent \textbf{Negative frequency-dependent selection}:
Recall that when the distribution of $K_v$ is concentrated on $\{1,3\}$, we get
$$ \mu_i(x)=2 x_i
\left[ \sum_{j \neq i} x_j^2-x_i(1-x_i) \right]. $$
As 
$$ \sum_{j \neq i} x_j^2\leq \sum_{j \neq i} x_j = 1-x_i, $$
the assumptions of Proposition \ref{pro_succ_ext} are satisfied. If $K_v=p \notin \{1,3\}$, for a parent of type $i$ to be chosen, 
a potential parent of type $i$ has to be present. This ensures that $\mu_i(x)$ can be written as $x_i \tilde{s}_i(x)$ with 
$\tilde{s}$ bounded. Moreover, if we exclude the case when there are only parents of type $i$, 
which contributes with a term $\rho_Nx_i(x_i^{p-1}-1)$ in $\mu_i$, 
we need to choose $u<p$ parents of type $i$, which occurs with probability $x_i^u$, and $p-u$ parents with a type different from $i$, 
which occurs with probability $(1-x_i)^{p-u}$.
We thus obtain a finite sum of terms smaller than 
$$x_i^u(1-x_i)^{p-u} \leq 1-x_i.$$
This ensures that the negative frequency-dependent selection rule satisfies the assumptions of Proposition \ref{pro_succ_ext}.

\vspace{.4cm}
\noindent \textbf{Positive frequency-dependent selection} and \textbf{Logistic competition}:
In this cases, the calculations are very similar to the previous case. We thus do not give details.

%
%
%
\end{proof}

\begin{proof}[Proof of Lemma \ref{lem_final_state_transi}]
Lemma \ref{lem_final_state_transi} is a consequence of Lemma 4.7 in \cite{casanova2018duality}.
Let us first assume condition of (i) of Lemma \ref{lem_final_state_transi}. Then from Lemma 4.7 in \cite{casanova2018duality},
$(D_t, t \geq 0)$ has a unique stationary distribution.
Let us choose $i \in E$ and divide $E$ into two subsets, $E_1:=\{0,...,i\}$ and $E_2:=\{i+1,...,K\}$. 
 Treating the types of $E_1$ as the weak type 
 $0$ in \cite{casanova2018duality}, and the types of $E_1$ as the selected type 
 $1$ in \cite{casanova2018duality}, we also get applying this lemma that
 $$ \P_x\left(\lim_{t \to \infty}(X_0(t)+...+X_i(t))=1\right)= \phi_\nu(x_0+...+x_i) $$
 and
 $$ \P_x\left(\lim_{t \to \infty}(X_0(t)+...+X_i(t))=0\right)= 1-\phi_\nu(x_0+...+x_i). $$
 Applying the same trick to $E_1:=\{0,...,i-1\}$ and $E_2:=\{i,...,K\}$ allows to conclude the proof of point (i).
  Let us now assume the condition of (ii). Then the process $X_S$ has the same properties that the process $X_1$ in \cite{casanova2018duality}, 
 for which there is almost sure fixation in finite time. This ends the proof.
\end{proof}

 \begin{proof}[Proof of Lemma \ref{sneeeeky}]
The proof will be based on studying the action of the generator $\mathcal{A}$ of $X$ over the Logarithm. 
In other words, we will use the Logarithm as a Lyapunov function to study the long term behaviour of $(X_t, t \geq 0)$. 
First, applying Theorem 1 of \cite{griffiths2014lambda}, we obtain the existence of random variables $V$ and $W$ in $[0,1]$ with continuous densities such that
the infinitesimal generator $\mathcal{A}$ of $(X(t), t \geq 0)$ applied to a function $g$ at $x$ on $\Delta_3$ can be rewritten
\begin{align*} \label{inf_gene} \mathcal{A}g(x)&= \sum_{i=1}^3 \mu_i(x) \frac{\partial g}{\partial x_i}(x) + 
\sum_{i,j=1}^3 \sigma^2_{ij}(x) \left( \sigma  \frac{\partial^2 g}{\partial x_i x_j}(x)+ \frac{1}{2}
\E\left[ \frac{\partial^2 g}{\partial x_i x_j}(x(1-W)+VW\mathbf{e}_i) \right] \right)\\
&= \sum_{i=1}^3 \mu_i(x) \frac{\partial g}{\partial x_i}(x) + 
\sum_{i=1}^3 \left(\sum_{j=1}^3 \sigma^2_{ij}(x) \left( \sigma  \frac{\partial^2 g}{\partial x_i x_j}(x)+ \frac{1}{2}
\E\left[ \frac{\partial^2 g}{\partial x_i x_j}(x(1-W)+VW\mathbf{e}_i) \right] \right)\right). \end{align*}

Let 
$$f_i(X(t))=\ln(X_i(t))=\ln\left(1-X_{mod_3(i-1)}(t)+X_{mod_3(i+1)}(t)\right).$$ 
Then, a direct calculation leads to
$$ \sum_{i=1}^3 \mu_i(x) \frac{\partial f_1}{\partial x_i}(x)= 2 \kappa (x_3-x_2).  $$
Moreover, for any $y \in \Delta_3$,
\begin{align*}
 \sum_{j=1}^3 \sigma^2_{1j}(x)  \frac{\partial^2 f_1}{\partial x_i x_j}(y)
 = x_1(1-x_1)\left(-\frac{1}{y_1^2}\right)-x_1x_2\left(\frac{1}{y_1^2}\right)- x_1x_3\left(\frac{1}{y_1^2}\right)
 = - 2\frac{x_1(x_2+x_3)}{y_1^2},
\end{align*}
\begin{align*}
 \sum_{j=1}^3 \sigma^2_{2j}(x)  \frac{\partial^2 f_1}{\partial x_i x_j}(y)
 =-x_1x_2\left(\frac{1}{y_1^2}\right)+ x_2(1-x_2)\left(-\frac{1}{y_1^2}\right)- x_2x_3\left(-\frac{1}{y_1^2}\right)
 = - 2\frac{x_1x_2}{y_1^2},
\end{align*}
and
\begin{align*}
 \sum_{j=1}^3 \sigma^2_{3j}(x)  \frac{\partial^2 f_1}{\partial x_i x_j}(y)
 = - 2\frac{x_1x_3}{y_1^2},
\end{align*}
by a similar computation.
Adding all the terms yields for the function $f_1$:
$$  \mathcal{A}f_1(x)= 2 \kappa (x_3-x_2)- (1-x_1) \left( \frac{4\sigma}{x_1}+ \E \left[ \frac{x_1}{x_1(1-W)+VW}+ \frac{1}{1-W} \right) \right],  $$
and adding over the three functions yields
\begin{align*}  \mathcal{A}(f_1+f_2+f_3)(x)= - \sum_{i=1}^3(1-x_i) \left( \frac{4\sigma}{x_i}+ \E \left[ \frac{x_i}{x_i(1-W)+VW}+ \frac{1}{1-W} \right) \right] 
 \leq -C, \end{align*}
where $C$ is a positive constant if and only if $\sigma\neq 0$ or $\Lambda\neq 0$. We can now conclude using the generator equation that
\begin{multline*}
\lim_{t\rightarrow \infty} \E[\ln(X_1(t)X_2(t)X_3(t))]-\ln(x_1x_2x_3)
\\=\lim_{t\rightarrow \infty} \int_0^t\E_x[Af_1(X(s))+Af_2(X(s))+Af_3(X(s))]ds
\\\leq-\lim_{t\rightarrow \infty} \int_0^t C ds=-\infty .
\end{multline*}
\end{proof}

\section*{acknowledgments}The authors are grateful to S. Billiard, A. Bovier, M. E. Caballero, F. Cordero, S. Hummel and 
E. Schertzer for many interesting discussions.
We thank the CNRS for its financial support through its competitive funding programs on interdisciplinary research.
AGC was supported by CONACYT through the project Ciencia Basica A1-S-14615, and
CS by the Chair "Modélisation Mathématique et Biodiversité" of VEOLIA-Ecole Polytechnique-MNHN-F.X.

\end{document}